\newcounter{statements}
\newcounter{conjectures}
\newtheorem{theorem}[statements]{Theorem}
\newtheorem{conjecture}[conjectures]{Conjecture}
\newtheorem{lemma}[statements]{Lemma}
\newtheorem{corollary}[statements]{Corollary}
\newtheorem{proposition}[statements]{Proposition}
\theoremstyle{definition}
\newtheorem{definition}[statements]{Definition}
\theoremstyle{remark}
\newtheorem{remark}[statements]{Remark}
\newcommand{\QQ}{{\mathbb Q}}
\newcommand{\ZZ}{{\mathbb Z}}
\newcommand{\PP}{{\mathbb P}}
\newcommand{\CC}{{\mathbb C}}
\newcommand{\Aff}{{\mathbb A}}
\newcommand{\bbA}{{\mathbb A}}
\newcommand{\bbC}{{\mathbb C}}
\newcommand{\bbH}{{\mathbb H}}
\newcommand{\bbP}{{\mathbb P}}
\newcommand{\pic}{\mathrm{Pic}\,}
\newcommand{\cO}{\mathcal{O}}
\newcommand{\id}{\mathrm{id}}
\newcommand{\coh}{\operatorname{coh}\,}
\newcommand{\logdiv}{\operatorname{log}\,}
\newcommand{\crit}{\operatorname{crit}}
\begin{document}

\title{Landau--Ginzburg Hodge numbers for mirrors of del Pezzo surfaces}

\author[V.\,Lunts, V.\,Przyjalkowski]{Valery Lunts and Victor Przyjalkowski}

\address{\emph{Valery Lunts}
\newline
\textnormal{Department of Mathematics, Indiana University,
\newline
Rawles Hall, 831 East 3rd Street,
Bloomington, IN 47405, USA.
}
\newline
\textnormal{National Research University Higher School of Economics, Laboratory of Mirror Symmetry, NRU HSE, 6 Usacheva street, Moscow, 117312, Russia.}
\newline
\textnormal{\texttt{vlunts@indiana.edu}}}

\address{\emph{Victor Przyjalkowski}
\newline
\textnormal{Steklov Mathematical Institute of RAS, 8 Gubkina street, Moscow 119991, Russia.
}
\newline
\textnormal{National Research University Higher School of Economics, Laboratory of Mirror Symmetry, NRU HSE, 6 Usacheva street, Moscow, 117312, Russia.
}
\newline
\textnormal{\texttt{victorprz@mi.ras.ru, victorprz@gmail.com}}}

\maketitle

\begin{abstract}
We consider the conjectures from 
\cite{KKP14} about Landau--Ginzburg Hodge numbers associated to tamely compactifiable Landau--Ginzburg models. We test these conjectures
in case of dimension two, verifying some and giving a counterexample to the other.
\end{abstract}

\section{Introduction}
Homological Mirror Symmetry (HMS) conjecture of Kontsevich is a master conjecture which is expected to have many numerical consequences.
Such numerical predictions can be formulated and tested in cases where HMS has not yet been established. In the paper~\cite{KKP14} (Def. 3.1,3.2,3.4)
the authors define three types of Hodge-theoretical invariants
$$
f^{p,q}(Y,w),\ \ h^{p,q}(Y,w),\ \ i^{p,q}(Y,w)
$$
of tamely compactifiable (see Definitions \ref{def-1} and \ref{def-3}) Landau--Ginzburg models $w\colon Y\to \CC$.
The numbers $f^{p,q}(Y,w)$ come from the sheaf cohomology of certain logarithmic forms,
the numbers $h^{p,q}(Y,w)$ come from mirror considerations, and the numbers $i^{p,q}(Y,w)$ come from ordinary mixed Hodge theory.

It is proved in~\cite{KKP14} (p.39) that these numbers satisfy the identities
\begin{equation} \label{sum-of-hodge-intro}
\dim H^m(Y,Y_b;\bbC)=\sum _{p+q=m}i^{p,q}(Y,w)=\sum _{p+q=m}h^{p,q}(Y,w)=\sum _{p+q=m}f^{p,q}(Y,w),
\end{equation}
where $Y_b$ is a smooth fiber of $w$. And the following refinements of~\eqref{sum-of-hodge-intro} is conjectured based on HMS considerations:
\begin{equation}\label{eq-hodge-numb}
f^{p,q}(Y,w)=h^{p,q}(Y,w)=i^{p,q}(Y,w).
\end{equation}

Tamely compactifiable Landau--Ginzburg models $(Y,w)$ typically appear as mirrors of
projective Fano manifolds $X$.
HMS predicts an equivalence of triangulated categories
$$
D^b(\coh X)\simeq FS(Y,w,\omega_Y),
$$
where $D^b(\coh X)$ is a bounded derived category of coherent sheaves on $X$ and $FS(Y,w,\omega_Y)$ is a Fukaya--Seidel
category of $(Y,w)$ for an appropriately chosen symplectic form $\omega_Y$. In this situation the authors of ~\cite{KKP14} make an additional
conjecture
\begin{equation}
\label{equation: Hodge rotation}
f^{p,q}(Y,w)=h^{p,n-q}(X),
\end{equation}
where $n=\dim X=\dim Y$.

In this paper we test conjectures~\eqref{eq-hodge-numb} and~\eqref{equation: Hodge rotation} in the case $\dim Y=2$, i.\,e.
when $Y$ is a specific rational surface with a map $w\colon Y\to \CC$ such that the generic fiber is an elliptic curve. In this case we prove the equality $f^{p,q}(Y,w)=h^{p,q}(Y,w)$ and give an example, where~$i^{p,q}(Y,w)\neq h^{p,q}(Y,w)$. It is interesting to find a ``correct''
definition of numbers~$i^{p,q}(Y,w)$ which would be compatible with conjecture~\eqref{eq-hodge-numb}.
Moreover, in case the Landau--Ginzburg model is mirror to a del Pezzo surface $X$ (see~\cite{AKO06}) we prove that $f^{p,q}(Y,w)=h^{p,2-q}(X)$,
thus verifying conjecture~\eqref{equation: Hodge rotation}.

Actually we first correct slightly the definition of the numbers $h^{p,q}(Y,w)$ since the original definition in~\cite{KKP14}
is clearly not what the authors had in mind. We hope that our methods can be used in testing conjectures~\eqref{sum-of-hodge-intro}
and~\eqref{equation: Hodge rotation} in higher dimensions.

\medskip

The paper is organized as follows. In Section~\ref{section-definition} we give the main definitions
and formulate the conjectures that we consider following~\cite{KKP14}. In particular we recall definitions of the numbers $f^{p,q}(Y,w)$, $h^{p,q}(Y,w)$,
and $i^{p,q}(Y,w)$. We also formulate the main theorem of the paper (Theorem~\ref{theorem:main}).
In Section~\ref{section:monodromy} we study monodromy of Landau--Ginzburg models.
In the following sections we consider the case of dimension $2$.
In Section~\ref{section:topology} we study topology and cohomological properties of the elliptic surfaces that we are interested in.
In Section~\ref{section:LG-Hodge-numbers-for-surfaces} we compute Landau--Ginzburg Hodge numbers for the elliptic surfaces
and prove Proposition~\ref{proposition:h-numbers} and Proposition~\ref{proposition:f-numbers}, which (together with Remark \ref{remark: Hodge for del Pezzo}) give a proof
of Theorem~\ref{theorem:main}. A big part of this section is computations of $f$-adopted log forms that are needed for the numbers $f^{p,q}(Y,w)$. Finally in Section~\ref{section:main} we discuss our results. In particular we discuss a counterexample
to a part of conjectures from~\cite{KKP14} related to numbers $i^{p,q}(Y,w)$.

\medskip

\textbf{Notation and conventions.}
All varieties are defined over the field of complex numbers~$\CC$ and we consider them as topological spaces with the classical analytic topology. For a pair of topological spaces  $Y_0\subset Y$
symbols like $H^\bullet(Y)$, $H^\bullet (Y,Y_0)$ will denote the singular cohomology (resp. relative singular cohomology) with coefficients in $\bbC$.
Symbols like $H_c^\bullet (Y)$ will denote the cohomology with compact supports (of $Y$ with coefficients in the constant sheaf $\bbC _Y$).

\medskip

{\bf Acknowledgments.} The authors are grateful to V.\,Golyshev, A.\,Kasprzyk, L.\,Katzarkov, and D.\,Orlov for useful discussions.
Special thanks to T.\,Pantev and V.\,Turaev for their help.
Both authors are partially supported by Laboratory of Mirror Symmetry NRU HSE, RF Government grant, ag. \textnumero 14.641.31.0001.
V.\,Lunts is partially supported by the NSA grant H98230-15-1-0255;
V.\,Przyjalkowski 
is partially supported by 
the Program of the Presidium of
 the Russian Academy of Sciences ``Fundamental Mathematics and
 its Applications'' under grant PRAS-18-01 and by Young Russian Mathematics award.

\section{Compactified Landau--Ginzburg models and Hodge-theoretical conjectures}\label{section-definition}

Let us recall some numerical conjectures from~\cite{KKP14} which are supposed
to follow from the conjectural Homological Mirror Symmetry between Fano manifolds and Landau--Ginzburg models.

\begin{definition} \label{def-1}\emph{A Landau--Ginzburg model} is a pair $(Y,w)$, where

\begin{enumerate}
\item $Y$ is a smooth complex quasi-projective variety with trivial canonical bundle $K_Y$;

\item $w\colon Y\to \bbA ^1$ is a morphism with a compact critical locus $\crit(w)\subset Y$.
\end{enumerate}

\end{definition}

\begin{remark}
Note that there are no conditions on singularities of fibers.
\end{remark}

Following~\cite{KKP14} we assume that there exists a {\it tame} compactification of the Landau--Ginzburg model as defined below,
see also a notion of log Calabi--Yau compactification of toric Landau--Ginzburg in~\cite{Prz16} and~\cite{Prz17}.
.

\begin{definition} \label{def-3}\emph{A tame compactified Landau--Ginzburg model} is the data $((Z,f),D_Z)$, where

\begin{enumerate}
\item $Z$ is a smooth projective variety and $f\colon Z\to \bbP ^1$ is a flat morphism.

\item $D_Z=(\cup _i D^h_i)\cup (\cup _jD_j^v)$ is a reduced normal crossings divisor such that

\begin{itemize}
\item[(i)] $D^v=\cup _jD^v_j$ is a scheme-theoretical pole divisor of $f$, i.e. $f^{-1}(\infty)=D^v$. In particular $ord _{D^v_j}(f)=-1$ for all $j$;

\item[(ii)] each component $D_i^h$ of $D^h=\cup _iD^h_i$ is smooth and horizontal for $f$, i.e. $f\vert _{D^h_i}$ is a flat morphism;

\item[(iii)] The critical locus $\crit(f)\subset Z$ does not intersect $D^h$.

\end{itemize}

\item $D_Z$ is an anticanonical divisor on $Z$.

\noindent One says that $((Z,f),D_Z)$ is \emph{a compactification of the Landau--Ginzburg model}
$(Y,w)$ if in addition the following holds:

\item $Y=Z\setminus D_Z$, $f\vert _Y=w$.
\end{enumerate}

\end{definition}

\begin{remark} In~\cite{KKP14} the authors require in above definitions an additional choice of
compatible holomorphic volume forms on $Z$ and $Y$. Since these forms will play no role
in our work we omitted them.
\end{remark}

Assume that we are given a Landau--Ginzburg model $(Y,w)$ with a tame compactification~$((Z,f),D_Z)$ as above.
We denote by $n=\dim Y=\dim Z$ the (complex) dimension of~$Y$ and $Z$. Choose a point $b\in  \bbA ^1$ which is near $\infty$ and such that the fiber $Y_b=w^{-1}(b)\subset Y$ is smooth.
In~\cite{KKP14} the authors define geometrically three sets of what they call ``Hodge numbers''  $i^{p,q}(Y,w)$, $h^{p,q}(Y,w)$, $f^{p,q}(Y,w)$. Let us recall the definitions.

\subsection{The numbers $f^{p,q}(Y,w)$}\label{subs-def-fpq} Recall the definition of the logarithmic de Rham complex~$\Omega^\bullet _Z(\logdiv  D_Z)$.
Namely, $\Omega ^s _Z(\logdiv D_Z)=\wedge ^s \Omega^1 _Z(\logdiv D_Z )$ and $\Omega^1 _Z(\logdiv D_Z )$ is a locally free~$\mbox{$\cO _Z$-module}$ generated locally by
$$\frac{dz_1}{z_1},\ldots,\frac{dz_k}{z_k},dz_{k+1},\ldots,dz_n$$
if $z_1\cdot \ldots\cdot z_k=0$ is a local equation of the divisor $D_Z$. Hence in particular $\Omega ^0 _Z(\logdiv D_Z)=\cO _Z$.

The numbers $f^{p,q}(Y,w)$ are defined using the  subcomplex $\Omega^\bullet _Z(\logdiv D_Z ,f)\subset \Omega^\bullet _Z(\logdiv D_Z)$ of~$f$-{\it adapted forms}, which we recall next.

\begin{definition} For each $a\geq 0$ define \emph{a sheaf $\Omega ^a _Z(\logdiv D_Z ,f)$ of $f$-adapted logarithmic forms} as a subsheaf of $\Omega ^a _Z(\logdiv D_Z)$ consisting of forms which stay logarithmic after multiplication by $df$. Thus
$$\Omega ^a _Z(\logdiv D_Z ,f)=\{\alpha \in \Omega ^a _Z(\logdiv D_Z)\ \vert \ df\wedge \alpha \in \Omega ^{a+1} _Z(\logdiv D_Z )\},$$
where one considers $f$ as a meromorphic function on $Z$ and $df$ is viewed as a meromorphic~$\mbox{1-form}$.
\end{definition}

\begin{definition} \emph{The Landau--Ginzburg Hodge numbers} $f^{p,q}(Y,w)$ are defined as follows:
$$f^{p,q}(Y,w)=\dim H^p(Z,\Omega ^q _Z(\logdiv D_Z ,f)).$$
\end{definition}

\subsection{The numbers $h^{p,q}(Y,w)$} Let $N\colon V\to V$ be a nilpotent operator on a finite dimensional vector space $V$ such that $N^{m+1}=0$. Recall that this data defines a canonical (monodromy) {\it weight filtration centered at $m$}, $W=W_\bullet (N,m)$ of $V$
$$0\subset W_0(N,w)\subset W_1(N,w)\subset \ldots\subset W_{2m-1}(N,m)\subset W_{2m}(N,m)=V$$
with the properties
\begin{enumerate}
\item $N(W_i)\subset W_{i-2}$,
\item the map $N^l\colon gr ^{W,m}_{m+l}V\to gr ^{W,m}_{m-l}V$ is an isomorphism for all $l\geq 0$.
\end{enumerate}

Let $S^1\simeq C\subset \bbP ^1$ be a smooth loop passing through the point $b$ that goes once around $\infty$ in the counter clockwise direction in such a way that there are no singular points of $w$ on or inside~$C$. It gives
the monodromy transformation
\begin{equation*} \label{monodromy-1}T\colon H^\bullet (Y_b)\to H^\bullet(Y_b)
\end{equation*}
and also the corresponding monodromy transformation on the relative cohomology
\begin{equation}\label{monodromy-2} T\colon H^\bullet (Y,Y_b)\to H^\bullet (Y,Y_b)
\end{equation}
in such a way that the sequence
\begin{equation*}\label{long-exact-seq}
\ldots\to H^m(Y,Y_b)\to H^m(Y)\to H^m(Y_b)\to H^{m+1}(Y,Y_b)\to \ldots
\end{equation*}
 is $T$-equivariant, where $T$ acts
trivially on $H^\bullet (Y)$. (See Section~\ref{section:monodromy} for the construction and the discussion of the
monodromy transformation $T\colon H^\bullet (Y,Y_b)\to H^\bullet (Y,Y_b)$.) Since we assume that the infinite fiber $f^{-1}(\infty)\subset Z$ is a reduced divisor with normal crossings, by Griffiths--Landman--Grothendieck Theorem  (see~\cite{Ka70}) the operator $T\colon H^m (Y_b)\to H^m (Y_b)$ is unipotent and $\left(T-\id\right)^{m+1}=0$. It follows that the transformation \eqref{monodromy-2} is also unipotent. Denote by $N$ the logarithm of the transformation \eqref{monodromy-2}, which is therefore a nilpotent operator on $H^\bullet (Y,Y_b)$.
One has $N^{m+1}=0$.

\begin{definition} \label{def-Fano-type} We say that the Landau--Ginzburg model $(Y,w)$ is {\it of Fano type} if the operator~$N$ on the relative cohomology $H^{n+a} (Y,Y_b)$ has the following properties:
\begin{enumerate}
\item $N^{n-|a|}\neq 0$,

\item $N^{n-|a|+1}=0$.
\end{enumerate}
\end{definition}

The above definition is motivated by the expectation that
the Landau--Ginzburg model of Fano type usually appears as a mirror of a projective Fano
manifold 
(see Subsection \ref{explanation} below).

\begin{definition}
\label{definition:hpq} Assume that $(Y,w)$ is a Landau--Ginzburg model of Fano type. Consider the relative cohomology $H^\bullet(Y,Y_b)$ with the nilpotent operator $N$ and the induced canonical filtration $W$. \emph{The Landau--Ginzburg Hodge numbers} $h^{p,q}(Y,w)$ are defined 
 as follows:
$$h^{p,n-q}(Y,w)=\dim gr _{2(n-p)}^{W,n-a}H^{n+p-q}(Y,Y_b)\ \ \text{if $a=p-q\geq 0$},$$
$$h^{p,n-q}(Y,w)=\dim gr _{2(n-q)}^{W,n+a}H^{n+p-q}(Y,Y_b)\ \ \text{if $a=p-q< 0$}.$$

\end{definition}

\begin{remark}
\label{remark:correction}
Our Definition~\ref{definition:hpq} differs from~\cite[Definition~3.2]{KKP14}
\begin{equation}\label{wrong-def}h^{p,q}(Y,w)=\dim gr _{p}^{W,p+q}H^{p+q}(Y,Y_b)
\end{equation}
by the indices of the grading. The equation \eqref{wrong-def} seems not to be what the authors had in mind. For example according to \eqref{wrong-def} the index $p$ is allowed to vary from $0$ to $2n$ and $q$ is allowed to be negative (see Subsection
\ref{explanation} for an explanation). Also the exponent of the action of $N$ on $H^{p+q}(Y,Y_b)$ is in general not expected to be $p+q$ (see Definition \ref{def-Fano-type}).

It is implicitly conjectured in~\cite{KKP14} that all the Jordan blocks of the nilpotent operator $N$ on $H^m(Y,Y_b)$ have the same parity (see Remark \ref{expl} below). Under this assumption we do not loose any information by considering only the even subquotients of the weight filtration as in the above definition.
\end{remark}

\subsection{The numbers $i^{p,q}(Y,w)$} Recall that for each $\lambda \in \bbA ^1$ one has the corresponding sheaf~$\phi _{w-\lambda}\bbC _Y$ of vanishing cycles for the fiber $Y_\lambda$, see~\cite{De73}. The sheaf $\phi _{w-\lambda}\bbC _Y$ is supported on the fiber $Y_\lambda$ and is equal to zero if $\lambda $ is not a critical value of $w$. From the works of Schmid, Steenbrink,
and Saito
it is classically known that the constructible complex $\phi _{w-\lambda}\bbC _Y$ carries a structure of a mixed Hodge module and so its hypercohomology inherits a mixed Hodge structure, see, for instance,~\cite{PS08}. For a mixed Hodge module $S$ we will denote by $i^{p,q}S$ the $(p,q)$-Hodge numbers of the $p+q$ weight graded piece $gr ^W_{p+q}S$.

\begin{definition}[{see~\cite[Definition 3.4]{KKP14} and~\cite{GKR12}}]

\begin{enumerate}
\item
Assume that the horizontal divisor $D^h\subset Z$ is empty, i.e. assume that the map $w\colon Y\to \bbA ^1$ is proper. Then \emph{the Landau--Ginzburg Hodge numbers}~$i^{p,q}(Y,w)$ are defined as follows:
$$
i^{p,q}(Y,w)=\sum _{\lambda \in \bbA ^1}\sum _ki^{p,q+k}\bbH ^{p+q-1}(Y_\lambda ,
\phi _{w-\lambda}\bbC _Y).
$$

\item
In the general case denote by $j\colon Y\hookrightarrow Z$ the open embedding and define
similarly
$$
i^{p,q}(Y,w)=\sum _{\lambda \in \bbA ^1}\sum _ki^{p,q+k}\bbH ^{p+q-1}(Y_\lambda ,
\phi _{w-\lambda}{\bf R}j_{*}\bbC _Y).
$$
\end{enumerate}
\end{definition}

\subsection{Conjectures} It is proved in~\cite{KKP14} (p.39) that for every $m$ the above numbers satisfy the equalities
\begin{equation} \label{sum-of-hodge}
\dim H^m(Y,Y_b;\bbC)=\sum _{p+q=m}i^{p,q}(Y,w)=\sum _{p+q=m}f^{p,q}(Y,w)
\end{equation}

\begin{remark}\label{expl} It is implicitly conjectured in~\cite{KKP14} that all the Jordan blocks of the nilpotent operator $N$ on $H^m(Y,Y_b)$ have the same parity.
Under this assumption the result in~\cite{KKP14} (p.39) also implies that
\begin{equation} \label{sum-of-hodge-add}
\dim H^m(Y,Y_b;\bbC)=\sum _{p+q=m}h^{p,q}(Y,w)
\end{equation}
\end{remark}

The authors state several conjectures which together refine the
equalities \eqref{sum-of-hodge} and \eqref{sum-of-hodge-add}. The next is a modification of~\cite[Conjecture $3.6$]{KKP14}, see Remark~\ref{remark:correction}.

\begin{conjecture}\label{conj-1} Assume that $(Y,w)$ is a Landau--Ginzburg model of Fano type. Then for every $p,q$ there are  equalities
\begin{equation*}\label{equat-of-indiv-hodge-numbers}
h^{p,q}(Y,w)=f^{p,q}(Y,w)=i^{p,q}(Y,w).
\end{equation*}
\end{conjecture}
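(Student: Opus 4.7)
The plan is to attack Conjecture \ref{conj-1} in the case $n=2$ that the paper focuses on, where $Y$ is a rational surface, $w\colon Y\to \bbA^1$ has generic fiber an elliptic curve, and a tame compactification $f\colon Z\to \bbP^1$ is an elliptic fibration. Rather than trying to prove a single intrinsic isomorphism among the three decorated cohomology groups, I would split the triple equality into the two statements $f^{p,q}(Y,w)=h^{p,q}(Y,w)$ and $h^{p,q}(Y,w)=i^{p,q}(Y,w)$ and treat them separately, because the geometric inputs are very different.

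For the equality $f^{p,q}=h^{p,q}$ I would compute both sides directly. To obtain $h^{p,q}(Y,w)$ I would first reduce $H^\bullet(Y,Y_b)$ to $H^\bullet(Y)$ and $H^\bullet(Y_b)$ via the long exact sequence, both of which are easily read off once the rational elliptic surface $Z$ and the anticanonical divisor $D_Z$ are given; then I would analyse the monodromy at infinity using the Griffiths--Landman--Grothendieck theorem together with the section on monodromy (Section \ref{section:monodromy}), extract the nilpotent operator $N$ on relative cohomology, and apply Definition \ref{definition:hpq}. To obtain $f^{p,q}(Y,w)$ I would describe the sheaves $\Omega^a_Z(\log D_Z, f)$ locally: near a point of $D^v$ (where $f$ has a simple pole) and near a point of $D^h$, the constraint $df\wedge \alpha\in \Omega^{a+1}_Z(\log D_Z)$ cuts out an explicit subsheaf of the full log de Rham complex, and the resulting sheaves can be computed from the adjunction sequences for the components of $D_Z$, Serre duality on $Z$, and vanishing for line bundles on the rational surface.

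The comparison of the dimensions so obtained should then be a finite case check, and in favorable cases (\emph{e.g.} mirrors of del Pezzo surfaces in the sense of \cite{AKO06}) it should simultaneously verify conjecture \eqref{equation: Hodge rotation} by matching against the Hodge diamond of $X$, giving Theorem \ref{theorem:main} with Propositions \ref{proposition:h-numbers} and \ref{proposition:f-numbers}. The main obstacle I anticipate is the computation of $H^\bullet(Z,\Omega^\bullet_Z(\log D_Z,f))$: although the $f$-adapted sheaves are locally describable, globally they are neither the usual log forms nor the naive shift, so obtaining clean dimension counts (rather than just a spectral sequence upper bound) is the technical heart of the argument.

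For $i^{p,q}(Y,w)$ the plan is, rather than trying to prove equality, to compute it in a test case and compare with $h^{p,q}$. Concretely I would pick one explicit mirror of a del Pezzo surface, identify the critical values $\lambda$ of $w$, describe the vanishing cycle complexes $\phi_{w-\lambda}\mathbf{R}j_\ast \bbC_Y$ there (using the Steenbrink--Saito theory to endow them with a mixed Hodge module structure), and read off the Hodge numbers of the associated graded of the weight filtration on their hypercohomology. Based on the introduction the expectation is that at least one example produces a mismatch with $h^{p,q}$; so this part of the plan is designed not to confirm Conjecture \ref{conj-1} but to isolate the discrepancy and indicate how the definition of $i^{p,q}(Y,w)$ should be corrected for the conjecture to have a chance of holding.
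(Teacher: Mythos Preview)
Your plan is correct and matches the paper's approach essentially step for step: the paper computes $h^{p,q}(Y,w)$ via the long exact sequence of the pair and the monodromy at infinity (Proposition~\ref{proposition:h-numbers}), computes $f^{p,q}(Y,w)$ by identifying the $f$-adapted log sheaves locally and then pushing through Serre duality and the degeneration of the log de Rham spectral sequence (Propositions~\ref{prop-fpq}, \ref{proposition: del Pezzo f}, \ref{prop-isom-on-coh}, \ref{proposition:f-numbers}), and then exhibits the mirror of $\bbP^2$ as a counterexample to $i^{p,q}=h^{p,q}$ (Remark~\ref{remark:i-numbers}). The only small simplification relative to your outline is that in the elliptic-surface setting $D^h$ is empty, so only the local analysis near $D^v$ is needed.
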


The Landau--Ginzburg model $(Y,w)$ of Fano type (together with a tame compactification) typically arises as a mirror of a
projective Fano manifold $X$, $\dim X=\dim Y$.

The following is~\cite[Conjecture $3.7$]{KKP14}, see Remark~\ref{remark:correction}.

\begin{conjecture} \label{conj-2} In the above mirror situation for each $p,q$ we have the equality
$$f^{p,q}(Y,w)=h^{p,n-q}(X),$$
where $h^{p,q}(X)$'s are the usual Hodge numbers for $X$.
\end{conjecture}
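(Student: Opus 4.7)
The statement is a conjecture in general; the paper establishes it in dimension two when $X$ is a del Pezzo surface, and that is the case I sketch. For a del Pezzo surface $X$ of degree $d$, the Hodge diamond has only three non-zero entries, so $h^{p,2-q}(X)$ vanishes except at $(p,q)=(0,2),(1,1),(2,0)$, where it takes values $1$, $10-d$, and $1$. The task therefore reduces to verifying the same numerical pattern for the Landau--Ginzburg Hodge numbers $f^{p,q}(Y,w)$.

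The plan is to combine the explicit Auroux--Katzarkov--Orlov description of the LG mirror of $X$ with a direct computation of the cohomology of the sheaves $\Omega^q_Z(\log\,D_Z,f)$. By~\cite{AKO06} one has a tame compactification $((Z,f),D_Z)$ where $Z$ is a rational elliptic surface, $D^v=f^{-1}(\infty)$ is a Kodaira fiber of type $I_k$, and $D^h$ is an explicit union of disjoint sections; the combinatorics of $D^v$ and $D^h$ is dictated by $d$. The outer columns of the $f^{p,q}$-diamond are essentially immediate: since $D_Z=-K_Z$ is anticanonical and $\dim Z=2$, the top sheaf satisfies $\Omega^2_Z(\log\,D_Z,f)=\Omega^2_Z(\log\,D_Z)=\cO_Z$; from $\mathrm{ord}_{D^v_j}(f)=-1$ one identifies $\Omega^0_Z(\log\,D_Z,f)=\cO_Z(-D^v)$. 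Their cohomologies on the rational surface $Z$ are then obtained via Serre duality and the short exact sequence for the inclusion of $D^v$, yielding $f^{0,2}=f^{2,0}=1$ and vanishing in the other extreme slots.

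The substantive step is the $q=1$ column, computed through the $f$-adapted subsheaf $\Omega^1_Z(\log\,D_Z,f)\subset \Omega^1_Z(\log\,D_Z)$. This subsheaf is strictly smaller than $\Omega^1_Z(\log\,D_Z)$ along $D^v$; one must model it explicitly in local coordinates at smooth points of $D^v$, at the nodes of the $I_k$-fiber, and at the crossings $D^h\cap D^v$, then glue and take cohomology through the resulting short exact sequences. I expect this to be the main obstacle: the correct value $f^{1,1}=10-d$ emerges only after organizing the local contributions and matching them against the number of components of $D^v$ plus the number of sections in $D^h$. These computations are precisely the content of Proposition~\ref{proposition:f-numbers} in Section~\ref{section:LG-Hodge-numbers-for-surfaces}; once those identities are in hand, the remaining work is a numerical match with the Picard rank of $X$ via the~\cite{AKO06} dictionary, as recorded in Remark~\ref{remark: Hodge for del Pezzo}.
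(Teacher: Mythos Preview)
Your overall strategy---compute the sheaves $\Omega^q_Z(\log D_Z,f)$ and their cohomology, then match against the del Pezzo Hodge diamond---is the same as the paper's, and your treatment of the extremes $q=0,2$ is essentially right. But there is a substantive error in your setup that affects the heart of the argument.

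In the paper's tame compactification the horizontal divisor $D^h$ is \emph{empty}: the summary of results states explicitly that $D=D^v$, and all of Section~\ref{section:topology} and Section~\ref{section:LG-Hodge-numbers-for-surfaces} work with $D=f^{-1}(\infty)$ a wheel $I_d$ and nothing else. So your description of $D^h$ as ``an explicit union of disjoint sections'' and your plan to analyze ``crossings $D^h\cap D^v$'' and to extract $10-d$ from ``the number of components of $D^v$ plus the number of sections in $D^h$'' are based on a compactification the paper does not use. (A section $E$ does appear in the paper, but as an auxiliary curve in $Z$, not as part of $D_Z$.)

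For the $q=1$ column the paper's route is also rather different from what you sketch. It does not proceed by tallying local contributions. Instead it establishes the short exact sequence
\[
0\to \Omega^1_Z(\log D)(-D)\to \Omega^1_Z(\log D,f)\to \cO_D\to 0
\]
(Proposition~\ref{proposition: del Pezzo f}(iii)), computes $h^i\bigl(Z,\Omega^1_Z(\log D)(-D)\bigr)$ via the Deligne--Illusie degeneration of the spectral sequence converging to $H^\bullet_c(Y)$ (Proposition~\ref{prop-fpq}, using Lemma~\ref{lemma:C_Y}), and then shows the inclusion induces an isomorphism on cohomology by proving the connecting map $H^0(\cO_D)\to H^1\bigl(\Omega^1_Z(\log D)(-D)\bigr)$ is nonzero, which is done by restricting the sequence to a section $E\simeq\bbP^1$ (Proposition~\ref{prop-isom-on-coh}). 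The number $10-d$ thus arises from $h^2_c(Y)=11-d$ minus the $1$ coming from $\Omega^0(\log D)(-D)=\omega_Z$, not from any component count.
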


\subsection{Explanation of Conjectures}\label{explanation}
We refer the interested reader to~\cite{KKP14} for a detailed description of the motivation for Conjectures \ref{conj-1} and \ref{conj-2}. Basically the motivation comes from HMS,  Hochschild homology identifications, and identification of the monodromy operator with the Serre functor. Namely, assume that the Landau--Ginzburg model $(Y,w)$ as above (together with a tame compactification) is of Fano type and is a mirror of a projective Fano manifold $X$, $\dim X=\dim Y$. Then by HMS conjecture one expects an equivalence of categories
\begin{equation}\label{eq-of-cat}D^b(\coh X)\simeq FS((Y,w),\omega_Y),
\end{equation}
where $D^b(\coh X)$ is the bounded derived category of coherent sheaves on $X$ and $FS((Y,w),\omega_Y)$ is the Fukaya--Seidel category of the Landau--Ginzburg model $(Y,w)$ with an appropriate  symplectic form $\omega_Y$. This equivalence induces for each $a$ an isomorphism of the Hochschild homology spaces
\begin{equation*}\label{eq-of-hoch-hom}
HH_a(D^b(\coh X))\simeq HH_a(FS((Y,w),\omega_Y)).
\end{equation*}
It is known that
\begin{equation}\label{eq-of-hoch-derham}HH_a(D^b(\coh X))\simeq \bigoplus _{p-q=a}H^p(X,\Omega _X^q)
\end{equation}
and it is expected that
\begin{equation}\label{eq-of-hoch-fuk}HH_a(FS((Y,w),\omega_Y))\simeq H^{n+a}(Y,Y_b).
\end{equation}
The equivalence \eqref{eq-of-cat} and isomorphisms \eqref{eq-of-hoch-derham}, \eqref{eq-of-hoch-fuk} suggest an isomorphism
\begin{equation*}\label{summary-isom}
H^{n+a}(Y,Y_b)=\bigoplus _{p-q=a}H^p(X,\Omega _X^q).
\end{equation*}
Moreover, the equivalence \eqref{eq-of-cat} identifies the Serre functors $S_X$ and $S_Y$ on the two categories. The
functor $S_X$ acts on the cohomology $H^\bullet (X)$ and the logarithm of this operator is equal (up to a sign) to
the cup-product with $c_1(K_X)$. Since $X$ is Fano, the operator $c_1(K_X)\cup (\cdot )$ is a Lefschetz operator on the
space
$$\bigoplus _{p-q=a}H^p(X,\Omega _X^q)$$
for each $a$.
On the other hand, the Serre functor $S_Y$ induces an operator on the space~$H^{n+a}(Y,Y_b)$ which is the inverse
of the monodromy transformation $T$. This suggests that the weight filtration for the nilpotent operator $c_1(K_X)\cup (\cdot )$ on
the space $\bigoplus _{p-q=a}H^p(X,\Omega _X^q)$ should coincide with the similar filtration for the logarithm $N$ of the operator $T$ on
$H^{n+a}(Y,Y_b)$.
First notice that the operator
$c_1(K_X)\cup (\cdot )$ on
the space $\bigoplus _{p-q=a}H^p(X,\Omega _X^q)$ satisfies
$(c_1(K_X)\cup (\cdot ))^{n-|a|}\neq 0$ by the Hard Lefschetz theorem and~$\left(c_1(K_X)\cup (\cdot )\right)^{n-|a|+1}= 0$. This explains our Definition \ref{def-Fano-type}.
Moreover, the induced filtration $W$ on $\bigoplus _{p-q=a}H^p(X,\Omega _X^q)$ has the properties:
$$h^{p,q}(X)=gr^{W,n-a}_{2(n-p)}\left[\bigoplus _{p-q=a}H^p(X,\Omega _X^q)\right]\ \ \text{if $a\geq 0$}$$
and
$$h^{p,q}(X)=gr^{W,n+a}_{2(n-q)}\left[\bigoplus _{p-q=a}H^p(X,\Omega _X^q)\right]\ \ \text{if $a< 0$}.$$
Thus one expects the equality of Hodge numbers
$$h^{p,n-q}(Y,w)=h^{p,q}(X),$$
which is a combination of the above conjectures.

\subsection{Summary of results} In this work we consider
tame compactified Landau--Ginzburg model $(Z,f)$ of dimension $2$. More precisely, we consider a rational elliptic surface $f\colon Z\to \bbP ^1$  with $f^{-1}(\infty)$ being a reduced divisor which is a wheel of $d$ rational curves, $1\leq d\leq 9$ (it is a nodal rational curve if $d=1$). In this case the horizontal divisor $D^h$ is empty, so $D=D^v$. In the paper \cite{AKO06} it is proved that the corresponding Landau--Ginzburg model $(Y,w)$ appears as a (homological) mirror of a del Pezzo surface $X$ of degree $d$. The authors also establish HMS for the case $d=0$: in this case $f^{-1}(\infty)$ is a smooth elliptic curve and $(Y,w)$ is mirror to the blowup $X$ of $\bbP ^2$ in $9$ points of intersection of two cubic curves. Note that such~$X$ is not Fano, hence one expects that the corresponding Landau--Ginzburg model $(Y,w)$ is not of Fano type. We confirm this prediction. The next theorem summarizes the main results of our paper.

\begin{theorem}\label{theorem:main} Let $f\colon Z\to \bbP ^1$ be an elliptic surface with the reduced infinite fiber
$D=f^{-1}(\infty)$ which is a wheel of $d$ rational curves for $1\leq d\leq 9$ or is a smooth elliptic curve for $d=0$. We assume that $f$ has a section. As before put $(Y,w)=(Z\setminus D,f\vert _{Z\setminus D})$.

\begin{enumerate}
\item[(i)] If $1\leq d\leq 9$, then the Landau--Ginzburg model $(Y,w)$ is of Fano type and there are equalities of Hodge numbers $$f^{p,q}(Y,w)=h^{p,q}(Y,w).$$

\item[(ii)] Let $1\leq d\leq 9$ and let $X$ be a del Pezzo surface
which is a mirror
in the sense of~\cite{AKO06} to the Landau--Ginzburg model $(Y,w)$. There are equalities of Hodge numbers $$f^{p,q}(Y,w)=h^{p,2-q}(X).$$

\item[(iii)] If $d=0$, then $(Y,w)$ is not of Fano type.
\end{enumerate}
\end{theorem}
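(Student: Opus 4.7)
My plan is to establish all three parts of Theorem~\ref{theorem:main} by separately computing the topological and monodromy data that determine $h^{p,q}(Y, w)$ and, in parallel, the cohomology of the $f$-adapted logarithmic sheaves $\Omega^\bullet_Z(\log D, f)$ that determine $f^{p,q}(Y, w)$, and then comparing. The first step is purely topological: since $Z$ is a rational elliptic surface, $D = f^{-1}(\infty)$ is a wheel of $d$ rational curves for $d\geq 1$ or a smooth elliptic curve for $d=0$, and $Y_b$ is a smooth elliptic curve, the long exact sequences of the pairs $(Z, D)$ and $(Y, Y_b)$ together with Poincar\'e duality on $Y$ give $H^m(Y, Y_b) = 0$ for $m \neq 2$ and a short exact sequence $0 \to H^1(Y_b) \to H^2(Y, Y_b) \to K \to 0$, where $K = \ker\bigl(H^2(Y) \to H^2(Y_b)\bigr)$ has dimension $10 - d$; in particular $\dim H^2(Y, Y_b) = 12 - d$.

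For part~(i), the main task is to understand $N = \log T$ on $H^2(Y, Y_b)$. Using that the relative LES is $T$-equivariant and $T$ acts trivially on $H^\bullet(Y)$, $T$ is upper triangular with diagonal blocks the Picard--Lefschetz monodromy $T_0 = \bigl(\begin{smallmatrix}1 & d \\ 0 & 1\end{smallmatrix}\bigr)$ on $H^1(Y_b)$ and the identity on $K$, with an off-diagonal ``variation'' $A\colon K \to H^1(Y_b)$. The key geometric input is to show that the image of $A$ is not contained in the $1$-dimensional invariant line $\ker N_0 \subset H^1(Y_b)$, equivalently $N^2 \neq 0$ on $H^2(Y, Y_b)$; this can be checked on an explicit class in $K$ by computing its variation around $\infty$ via the Picard--Lefschetz formula at the other singular fibers over $\bbA^1$, using the existence of the section. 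Since $N^3 = 0$ is automatic from reducedness of $D$, one reads off $\dim \mathrm{im}\, N = 2$ and $\dim \mathrm{im}\, N^2 = 1$, so the Jordan form of $N$ consists of exactly one block of size $3$ and $9-d$ blocks of size $1$. The monodromy weight filtration then has $\dim \gr_0^W = \dim \gr_4^W = 1$, $\dim \gr_1^W = \dim \gr_3^W = 0$, and $\dim \gr_2^W = 10-d$, so $(Y, w)$ is of Fano type and Definition~\ref{definition:hpq} yields $h^{0,2}(Y, w) = h^{2,0}(Y, w) = 1$ and $h^{1,1}(Y, w) = 10-d$.

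To compute $f^{p,q}(Y,w)$ I would determine the $f$-adapted sheaves locally and then compute their cohomology. The extreme cases are easy: the top-degree condition $df \wedge \alpha \in \Omega^3 = 0$ is vacuous, so $\Omega^2_Z(\log D, f) = \Omega^2_Z(\log D) \simeq \cO_Z$ (using $D \in |-K_Z|$), giving $f^{0,2}(Y,w) = h^0(\cO_Z) = 1$; and in degree zero a local analysis at a smooth point of $D$ (where $f = c/u$ and $df \wedge g$ has pole of order $2$ unless $u \mid g$) gives $\Omega^0_Z(\log D, f) = \cO_Z(-D) \simeq K_Z$, hence $f^{2,0}(Y,w) = h^2(K_Z) = 1$. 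The technical work is $\Omega^1_Z(\log D, f)$: locally at a smooth point of $D$ it is $\cO \cdot du/u \oplus u\cO \cdot dz$, while at a node $u_1 u_2 = 0$ the $f$-adapted condition forces the two logarithmic coefficients of a local section to agree modulo $(u_1 u_2)$. Fitting this into a short exact sequence relating it to $\Omega^1_Z(\log D)$ and to suitable sheaves supported on the components of $D$, I would compute $H^\bullet(Z, \Omega^1_Z(\log D, f))$ to get $h^0 = h^2 = 0$ and $h^1 = 10-d$, matching the $h^{p,q}(Y, w)$ above and finishing~(i). I expect this last step to be the main obstacle, since the sheaf is not one of the standard logarithmic sheaves and the node contributions couple the global combinatorics of the cycle $D$ with the local calculation.

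Part~(ii) is then an immediate comparison with the Hodge diamond of a del Pezzo surface of degree~$d$, for which $h^{0,0}(X) = h^{2,2}(X) = 1$, $h^{1,1}(X) = 10-d$ and all other Hodge numbers vanish. For part~(iii), when $d = 0$ the map $f$ is smooth at $\infty$, so the loop $C$ can be chosen in a contractible neighborhood of $\infty$ missing every singular fiber; hence $T$ acts trivially on $H^\bullet(Y_b)$ and, by equivariance, on $H^\bullet(Y, Y_b)$, so $N = 0$ and the Fano-type requirement $N^2 \neq 0$ on the nonzero space $H^2(Y, Y_b)$ fails.
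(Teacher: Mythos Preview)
Your overall strategy matches the paper's, but there is one genuine error and one significant methodological difference worth noting.

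\textbf{Error in part (iii).} Your assertion that $T$ acts trivially on $H^\bullet(Y,Y_b)$ ``by equivariance'' is false. Equivariance of the long exact sequence, together with $T=\id$ on $H^\bullet(Y)$ and $H^\bullet(Y_b)$, does \emph{not} force $T=\id$ on $H^\bullet(Y,Y_b)$: the extension $0 \to H^1(Y_b) \to H^2(Y,Y_b) \to K \to 0$ need not split $T$-equivariantly. In fact the paper shows (via Corollary~\ref{cor-of-general}) that for $d=0$ the operator $N$ on $H_2(Y,Y_b)$ has exactly two Jordan blocks of size $2$ and eight of size $1$, so $N\neq 0$. Your conclusion still survives, because all you need is $N^2=0$: since $T-\id$ on $H^2(Y,Y_b)$ lands in the subspace $H^1(Y_b)$ (as $T=\id$ on the quotient $K$) and $T_0-\id=0$ on $H^1(Y_b)$ when $d=0$, one gets $(T-\id)^2=0$ directly.

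\textbf{Different route for the key monodromy step in (i).} To obtain $N^2\neq 0$ for $1\le d\le 9$ you propose an explicit Picard--Lefschetz computation of the variation $A$ on an (unspecified) class in $K$. The paper avoids any such computation: it proves abstractly, via Lefschetz duality for the compact manifold with boundary $w^{-1}(\bbP^1\setminus \Delta^0)$ and using only $H^1(Y)=0$, that the image of $H_2(Y)\to H_2(Y,Y_b)$ equals the space of $T$-invariants (Proposition~\ref{prop-sum}, Corollary~\ref{cor-of-general}). This gives $\dim\ker(T-\id)=10-d$ at once; combined with the single size-$2$ block of $T_0$ on $H_1(Y_b)$ for $d>0$ (Kodaira), the Jordan type on $H_2(Y,Y_b)$ is forced to be one block of size $3$ and $9-d$ blocks of size $1$. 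This is uniform in $d$ and independent of the configuration of singular fibers over $\bbA^1$, whereas your approach would require producing and analysing a specific relative cycle.

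For the $f^{p,q}$ side your local descriptions are correct; the paper packages them as the short exact sequence $0\to \Omega^1_Z(\log D)(-D)\to \Omega^1_Z(\log D,f)\to \cO_D\to 0$ (rather than one relating to $\Omega^1_Z(\log D)$). The cohomology of the left term is read off from $H^\bullet_c(Y)$ via Deligne--Illusie degeneration of $\Omega^\bullet_Z(\log D)(-D)$, and the crucial nonvanishing of the boundary map $H^0(\cO_D)\to H^1(\Omega^1_Z(\log D)(-D))$ is checked by restricting the whole sequence to the section $E\simeq \bbP^1$, where it becomes $0\to \cO(-2)\to \cO(-1)\to \bbC_\infty\to 0$. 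This is exactly the ``main obstacle'' you flagged, and the restriction-to-the-section trick is how the paper resolves it.
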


The proof of Theorem \ref{theorem:main} is contained in~Proposition~\ref{proposition:h-numbers}, Proposition~\ref{proposition:f-numbers}, and Remark~\ref{remark: Hodge for del Pezzo}.

Thus Conjecture \ref{conj-1} about the numbers $f^{p,q}(Y,w)$, $h^{p,q}(Y,w)$ and Conjecture~\ref{conj-2} hold in case~$(Y,w)$ is of Fano type ($1\leq d\leq 9$). We will also show that in the context of Theorem~\ref{theorem:main} the numbers
$i^{p,q}(Y,w)$ are {\it not} equal to the numbers $f^{p,q}(Y,w)$ (or to the numbers
$h^{p,q}(Y,w)$, or $h^{p,2-q}(X)$), therefore providing a
counter example to Conjecture \ref{conj-1}, see Remark~\ref{remark:i-numbers}.
We do not know how to define the ``correct'' numbers $i^{p,q}(Y,w)$, which would make Conjecture~\ref{conj-1} true.

\section{Monodromy action on relative cohomology}
\label{section:monodromy}
Let $V$ be a smooth complex algebraic variety
of dimension $n$ with a proper morphism~$w\colon V\to \bbC$. Let $b \in \bbC $ be a regular value of $w$.
In this section we construct the monodromy action on the relative homology $H_\bullet (V,V_b)$, which by duality will induce the desired action on~$H^\bullet (V,V_b)$.

Let $C\simeq S^1\subset \bbP ^1$ be a smooth loop passing through the point $b$ that goes once around the $\infty$ in the counter clockwise direction in such a way that there are no singular values of $w$ on or inside $C$. Denote by $M$ the preimage $w^{-1}(C)\subset Y$. Then $M$ is a compact oriented smooth manifold which contains the fiber $V_b$.
The (real) dimensions of $M$ and $V_b$ are $2n-1$ and~$2n-2$ respectively.
By Ehresmann's Lemma the map $w\colon M\to C$ is a locally trivial fibration of smooth manifolds with the fibers diffeomorphic to $V_b$.
Hence there exists a diffeomorphism~$T\colon V_b\to V_b$ such that $M$ is diffeomorphic to the quotient
$$M=V_b\times [0,1]/\{ (a,0)=(T(a),1)\ \text{for all $a\in V_b$}\}.$$
For the pair $(M,V_b)$ we have the corresponding long exact homology sequence
\begin{equation}\label{les}\ldots\to H_i(V_b)\stackrel{\alpha _i}{\to} H_i(M)\stackrel{\beta _i}{\to} H_i(M,V_b)\stackrel{\partial _i}{\to} H_{i-1}(V_b)\to \ldots
\end{equation}
The diffeomorphism $T\colon V_b\to V_b$ induces an automorphism~$T\colon H_i(V_b)\to H_i(V_b)$ for each $i$.

\begin{lemma} \label{lemma-formula} For each $i\geq 0$, there exists a homomorphism $L_i\colon H_i(V_b)\to H_{i+1}(M,V_b)$ such that for all $x\in H_i(V_b)$ we have
$$\partial _{i+1}L_i(x)=T(x)-x.$$
\end{lemma}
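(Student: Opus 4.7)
The plan is to exploit the mapping-torus description of $M$ and build $L_i$ by a ``prism'' or ``sweep'' operation: a cycle $x$ on $V_b$ is dragged once around the loop $C$, producing a relative $(i+1)$-chain in $M$ whose boundary is $T(x)-x$.

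Concretely, parametrize $C$ by $[0,1]$ with both endpoints mapping to $b$, and apply Ehresmann's lemma to the proper submersion $w\colon w^{-1}(C)\to C$ to obtain a continuous trivialization
$$h\colon V_b\times [0,1]\longrightarrow M,\qquad h(a,0)=a,\qquad h(a,1)=T(a).$$
For a singular chain $x\in C_i(V_b)$ set
$$\widetilde L_i(x)=\pm\,h_*(x\times\mu)\in C_{i+1}(M),$$
where $\mu\in C_1([0,1])$ is the fundamental chain of the oriented interval, $\times$ denotes the Eilenberg--Zilber cross product, and the sign is chosen so that the Leibniz rule produces
$$\partial\widetilde L_i(x)=\widetilde L_{i-1}(\partial x)+T_*x-x.$$
Because $h(V_b\times\{0,1\})\subset V_b$, the chain $\widetilde L_i(x)$ is a relative $(i+1)$-chain in $(M,V_b)$. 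If $x$ is a cycle, the displayed formula shows that $\widetilde L_i(x)$ is a relative cycle and that the connecting map $\partial_{i+1}$ sends its class to $T_*x-x$; if $x=\partial z$, then applying the formula to $z$ exhibits $\widetilde L_i(x)$ as a boundary modulo chains in $V_b$. Hence $\widetilde L_i$ descends to a well-defined homomorphism
$$L_i\colon H_i(V_b)\longrightarrow H_{i+1}(M,V_b)$$
satisfying $\partial_{i+1}\circ L_i=T-\id$ as required.

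The essential input is the Ehresmann trivialization $h$; the chain-level manipulation afterwards is routine. The only technical points are bookkeeping: tracking the sign in the Eilenberg--Zilber Leibniz rule, and checking that the homology class is independent of the choice of $h$ --- but any two trivializations are isotopic, and the resulting chain homotopy only modifies $L_i$ by chains in $V_b$, which vanish in relative homology. I do not anticipate a genuine obstacle; the lemma is essentially a restatement, in the language of relative singular homology, of the well-known fact that the mapping-torus construction realizes the monodromy as a boundary.
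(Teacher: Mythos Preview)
Your proposal is correct and follows essentially the same approach as the paper: both construct $L_i$ by sweeping a cycle $z$ on $V_b$ through the product $V_b\times[0,1]$ and pushing the resulting prism into $M$ via the mapping-torus identification, so that its relative boundary becomes $T(z)-z$. The paper's argument is terser---it simply declares that the image of $z\times[0,1]$ in $M$ is the desired relative cycle---whereas you spell out the Eilenberg--Zilber formula, the descent to homology, and the independence from the choice of trivialization; these are exactly the details the paper leaves implicit.
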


\begin{proof} Let $z$ be an $i$-dimensional cycle in $V_b$. Consider the $(i+1)$-dimensional relative cycle~$z\times [0,1]$ in $(V_b\times [0,1],V_b\times \{0\}\cup V_b\times \{1\})$ with boundary $z\times \{1\}-z\times \{0\}$.
Its image~$L_i(z)$ in $M$ is a relative $(i+1)$-cycle with boundary $T(z)-z$ in $V_b$.
This construction yields the required homomorphism $L_i\colon H_i(V_b)\to H_{i+1}(M,V_b)$. Given $x\in H_i(V_b)$ the
assertion of the lemma
is clear from the construction.
\end{proof}

\begin{proposition}\label{injective} For each $i\geq 0$ the map $L_i\colon H_i(V_b)\to H_{i+1}(M,V_b)$ is injective.
\end{proposition}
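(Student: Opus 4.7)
The plan is to prove the stronger statement that $L_i$ is in fact an isomorphism, of which injectivity is an immediate consequence. Note that the long exact sequence~\eqref{les} combined with the identity $\partial_{i+1}\circ L_i = T - \id$ from Lemma~\ref{lemma-formula} is not enough by itself: it only shows that $\ker L_i \subseteq \ker(T - \id)$, and so whenever $T$ has a non-trivial fixed part on $H_i(V_b)$ the argument stalls. A geometric input is needed.

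The key observation is that $M$ is obtained from the cylinder $V_b \times [0,1]$ by the identification $(a, 0) \sim (T(a), 1)$; hence the quotient map $q \colon V_b \times [0,1] \to M$ restricts to a homeomorphism $V_b \times (0,1) \xrightarrow{\sim} M \setminus V_b$ and defines a \emph{relative homeomorphism} of good pairs
$$q \colon (V_b \times [0,1],\ V_b \times \{0,1\}) \longrightarrow (M, V_b).$$
Collapsing the distinguished subspaces, $q$ descends to a homeomorphism of quotients $(V_b \times [0,1])/(V_b \times \{0,1\}) \cong M/V_b$, so via the standard identification $H_\bullet(X, A) \cong \tilde H_\bullet(X/A)$ for good pairs (equivalently, by excision applied to a collar neighborhood of $V_b$ in $M$) the induced map
$$q_* \colon H_{i+1}(V_b \times [0,1],\ V_b \times \{0,1\}) \xrightarrow{\ \sim\ } H_{i+1}(M, V_b)$$
is an isomorphism.

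It remains to see that the ``suspension'' map $\sigma \colon H_i(V_b) \to H_{i+1}(V_b \times [0,1],\ V_b \times \{0,1\})$ sending $z \mapsto [z \times [0,1]]$ is itself an isomorphism, and this is immediate from the long exact sequence of the product pair: since $V_b \times [0,1] \simeq V_b$ and each of the two boundary inclusions $V_b \times \{j\} \hookrightarrow V_b \times [0,1]$ induces the identity on $H_\bullet(V_b)$, the sum map $H_i(V_b) \oplus H_i(V_b) \to H_i(V_b)$ is surjective with antidiagonal kernel $\cong H_i(V_b)$, and the first connecting map from $H_{i+1}(V_b \times [0,1])$ vanishes because $V_b \times [0,1]$ deformation retracts onto $V_b \times \{0\}$. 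By construction $L_i = q_* \circ \sigma$, so $L_i$ is a composition of two isomorphisms. The only point not reducible to purely formal manipulation is the verification that $q$ is a relative homeomorphism of pairs, which in the end is the geometric statement that $V_b$ occurs exactly once as a fiber of $M \to C$.
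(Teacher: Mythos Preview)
Your argument is correct and in fact proves the stronger statement that $L_i$ is an \emph{isomorphism}, not merely injective. The paper takes a genuinely different route: given a nonzero class $[z]\in H_i(V_b)$, it uses Poincar\'e duality on the compact oriented manifold $V_b$ to produce a dual cycle $z'$ with $[z']\cdot[z]\neq 0$, pushes $z'$ to a nearby fiber $V_\epsilon\subset M\setminus V_b$, and then invokes the Lefschetz duality pairing $H_{2n-2-i}(M\setminus V_b)\times H_{i+1}(M,V_b)\to\bbC$ to conclude $[z'_\epsilon]\cdot L_i[z]=\pm[z']\cdot[z]\neq 0$. Your approach is more elementary and more general: it needs only that $M\to C$ is a fiber bundle over a circle (so that the mapping-torus description gives a relative homeomorphism $(V_b\times[0,1],V_b\times\{0,1\})\to(M,V_b)$), and uses nothing about $V_b$ being a manifold, let alone compact or oriented. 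What you are really establishing is the identification $H_{i+1}(M,V_b)\cong H_i(V_b)$ that underlies the Wang exact sequence for a fibration over $S^1$. The paper's intersection-theoretic argument, by contrast, exploits the specific smooth-manifold structure and fits the duality-based style used elsewhere in Section~\ref{section:monodromy} (e.g.\ Proposition~\ref{prop-sum}), but at the cost of only yielding injectivity. One small terminological slip: what you call ``the first connecting map from $H_{i+1}(V_b\times[0,1])$'' is not a connecting homomorphism but the relative map $H_{i+1}(V_b\times[0,1])\to H_{i+1}(V_b\times[0,1],V_b\times\{0,1\})$; your reasoning for its vanishing is nonetheless correct.
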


\begin{proof} Let $z$ be an $i$-cycle on $V_b$, which represents a nonzero homology class $[z]\in H_i(V_b)$. By Poincar\'e duality for the compact smooth manifold $V_b$ of dimension $2n-2$, there exists a $(2n-2-i)$-cycle $z^\prime $ on $V_b$ such that the homological intersection $[z^\prime]\cdot [z]$ is non-zero. Choose a fiber $V_\epsilon\subset M$ of $w$ near $V_b$. By the construction of $M$, the fibers $V_b$ and $V_\epsilon$ are canonically identified. Let $z^\prime _\epsilon$ be the $(2n-2-i)$-cycle
on $V_\epsilon$ corresponding to $z^\prime$ under this identification. We will consider $z^\prime _\epsilon$ as a $(2n-2-i)$-cycle in the open manifold $M\setminus V_b$ and let~$[z^\prime _\epsilon]\in H_{2n-2-i}(M\setminus V_b)$ be its homology class. We have the perfect Lefschetz  duality pairing (see~\cite[Theorem 6.2.19]{Sp81})
$$H_{2n-2-i}(M\setminus V_b)\times H_{i+1}(M,V_b)\to \bbC$$
defined by intersection of cycles.
By the construction there is an equality of intersection numbers
$$[z^\prime _\epsilon]\cdot L_i[z]=\pm [z^\prime ]\cdot [z]\neq 0.$$
Hence $L_i[z]\neq 0$.
\end{proof}

\begin{definition} For each $i$ define the endomorphism $T\colon H_{i}(M,V_b)\to H_{i}(M,V_b)$ as~$T=\id +L_{i-1}\partial _{i}$ and the endomorphism $T\colon H_i(M)\to H_i(M)$ as $T=\id $.
(In particular~$T=\id $ on $H_0(M,V_b)$.)
\end{definition}

\begin{corollary}
\begin{itemize}
\item[(i)] For each $i$, the image of $\beta _i$ is equal to the space of $T$-invariants~$H_i(M,V_b)^T$.

\item[(ii)] For each $i$, the kernel of the map $\alpha _i\colon H_i(V_b)\to H_i(M)$ contains the subspace~$(T-\id)H_i(V_b)$.
Hence $\alpha _i$ factors through the space of coinvariants $H_i(V_b)_T$.

\item[(iii)] The long exact sequence \eqref{les} is compatible with the endomorphisms $T$.
\end{itemize}

\end{corollary}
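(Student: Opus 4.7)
The three assertions all follow by direct chasing from the definition $T=\id+L_{i-1}\partial_i$ on $H_i(M,V_b)$, together with the identity $\partial_{i+1}L_i(x)=T(x)-x$ of Lemma~\ref{lemma-formula}, the injectivity of $L_{i-1}$ from Proposition~\ref{injective}, and exactness of the sequence~\eqref{les}. I would prove the parts in the order (ii), (iii), (i).

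For (ii), given $x\in H_i(V_b)$, Lemma~\ref{lemma-formula} rewrites $(T-\id)x$ as $\partial_{i+1}L_i(x)$, which lies in the image of $\partial_{i+1}$ and hence in $\ker\alpha_i$ by exactness of \eqref{les}. Therefore $(T-\id)H_i(V_b)\subseteq\ker\alpha_i$ and $\alpha_i$ descends to the coinvariants as claimed.

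For (iii), I check $T$-equivariance at each position of \eqref{les}. Equivariance of $\alpha_i$ is precisely the content of (ii). For $\beta_i$ one computes
$$T\beta_i(z)=\beta_i(z)+L_{i-1}\partial_i\beta_i(z)=\beta_i(z),$$
since $\partial_i\beta_i=0$ by exactness. For $\partial_i$, applying Lemma~\ref{lemma-formula} once more gives
$$\partial_i T=\partial_i+\partial_i L_{i-1}\partial_i=\partial_i+(T-\id)\partial_i=T\partial_i.$$

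For (i), the inclusion $\mathrm{im}\,\beta_i\subseteq H_i(M,V_b)^T$ is the same computation as for $\beta_i$ in (iii). The reverse inclusion is the only step that is not purely formal: if $T(y)=y$ in $H_i(M,V_b)$, then by the definition of $T$ we have $L_{i-1}\partial_i(y)=0$, and Proposition~\ref{injective} forces $\partial_i(y)=0$, whence exactness of \eqref{les} places $y$ in the image of $\beta_i$. Thus the only non-formal input in the entire corollary is the injectivity of $L_{i-1}$, which was already established via Lefschetz duality in Proposition~\ref{injective}; the remainder is bookkeeping around the boundary formula of Lemma~\ref{lemma-formula}.
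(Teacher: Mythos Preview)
Your proof is correct and follows essentially the same route as the paper: the same boundary identity from Lemma~\ref{lemma-formula}, the same appeal to the injectivity of $L_{i-1}$ (Proposition~\ref{injective}) for the nontrivial inclusion in (i), and the same direct verifications of $T$-equivariance for $\alpha_i$, $\beta_i$, and $\partial_i$. The only difference is the order in which you present the parts, which is immaterial.
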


\begin{proof}
\begin{itemize}
\item[(i)] For $u\in H_i(M)$ we have
$$T(\beta _i(u))=\beta _i(u)+L_{i-1}\partial _i\beta _i(u)=\beta _i(u).$$
Vice versa, let $y\in H_i(M,V_b)^T$. Then $T(y)=y+L_{i-1}\partial _i(y)=y$, i.\,e.
$L_{i-1}\partial _i(y)=0$. However $L_{i-1}$ is injective by Proposition~\ref{injective}.
Hence $\partial _i(y)=0$, i.e. $y$ is in the image of $\beta _i$.

\item[(ii)] By Lemma~\ref{lemma-formula}, for $x\in H_i(V_b)$ we have
$$\partial _{i+1}L_i(x)=T(x)-x,$$
hence the image of $\partial _{i+1}$ contains the space $(T-\id)H_i(V_b)$.

\item[(iii)] The compatibility of the maps $\alpha _i$ and $\beta _i$ with $T$ follows from
(i) and (ii). Let~$y\in H_{i+1}(M,V_b)$. Then
$$\begin{array}{rcl}\partial _{i+1}T(y)& = & \partial _{i+1}\left(y+L_{i}\partial _{i+1}(y)\right)\\
& = & \partial _{i+1}(y)+\partial _{i+1}L_i\partial _{i+1}(y)\\
& = & \partial _{i+1}(y)+ (T-\id )\partial _{i+1}(y)\\
& = & T\partial _{i+1}(y).
\end{array}
$$
This proves the corollary.
\end{itemize}
\end{proof}

The inclusion of the pairs $(M,V_b)\subset (V,V_b)$ induces a morphism of the
homology sequences
$$\begin{array}{ccccccc}

\ldots \to & H_i(M) & \to & H_i(M,V_b)& \stackrel{\partial _i}{\to } & H_{i-1}(V_b) & \to \ldots\\
         & \downarrow & & \downarrow \gamma _i &  & \parallel & \\
\ldots\to & H_i(V) & \to & H_i(V,V_b) & \stackrel{\partial _i}{\to } & H_{i-1}(V_b)  & \to  \ldots
\end{array}
$$

\begin{definition} Let us define for each $i\geq 0$ the endomorphism $T\colon H_i(V,V_b)\to H_i(V,V_b)$ as the composition
$$T(y)=y+\gamma _iL_{i-1}\partial _i(y)$$
for $y\in H_i(V,V_b)$. In particular, $T=\id $ on $H_0(V,V_b)$. We also define $T\colon H_i(V)\to H_i(V)$ to be the identity.

By duality this defines the operators $T$ on the cohomology $H^i(V_b)$, $H^i(V,V_b)$, $H^i(V)$.
\end{definition}

\begin{corollary} The sequence
\begin{equation*}\label{seq2}
\ldots\to H_i(V) \to  H_i(V,V_b) \to   H_{i-1}(V_b)   \to  \ldots
\end{equation*}
is compatible with the endomorphisms $T$. Hence also the dual cohomology sequence
\begin{equation*}\label{seq3}
\ldots\to H^{i-1}(V_b) \to  H^i(V,V_b)  \to   H^i(V)   \to  \ldots
\end{equation*}
is compatible with $T$.
\end{corollary}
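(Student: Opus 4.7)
The plan is to verify directly that $T$ commutes with each map in the homology long exact sequence of the pair $(V,V_b)$, and then deduce the cohomological assertion by dualizing. The argument will mirror part (iii) of the preceding corollary for the pair $(M,V_b)$, transported along the natural morphism of pairs $(M,V_b)\hookrightarrow (V,V_b)$ displayed in the diagram just above the definition of $T$ on $H_i(V,V_b)$.

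First I would treat the map $H_i(V)\to H_i(V,V_b)$. Since $T=\id$ on $H_i(V)$, it suffices to show that every class in its image is $T$-invariant. If $y\in H_i(V,V_b)$ comes from $H_i(V)$, then $\partial_i(y)=0$ by exactness, and the defining formula
$$T(y)=y+\gamma_i L_{i-1}\partial_i(y)$$
immediately gives $T(y)=y$. Next I would check the boundary map $\partial_i\colon H_i(V,V_b)\to H_{i-1}(V_b)$. For arbitrary $y\in H_i(V,V_b)$,
$$\partial_i T(y)=\partial_i(y)+\partial_i\gamma_i L_{i-1}\partial_i(y).$$
The commutative diagram relating the homology sequences of $(M,V_b)$ and $(V,V_b)$ records the identity $\partial_i\circ\gamma_i=\partial_i$ (the rightmost vertical arrow in that diagram is the identity on $H_{i-1}(V_b)$), and then Lemma~\ref{lemma-formula} yields $\partial_i L_{i-1}(x)=T(x)-x$ for every $x\in H_{i-1}(V_b)$. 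Substituting $x=\partial_i(y)$ produces
$$\partial_i T(y)=\partial_i(y)+T(\partial_i(y))-\partial_i(y)=T(\partial_i(y)),$$
so $\partial_i$ is $T$-equivariant as required.

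Having established compatibility of $T$ with the homology sequence, the cohomological statement follows by general principles: the transpose of each $T$-equivariant map between finite-dimensional vector spaces is equivariant for the transposed endomorphisms, and by construction the $T$-action on cohomology is defined from the action on homology by precisely this duality. I do not foresee a real obstacle; the only point that warrants care is the use of the identity $\partial_i\circ\gamma_i=\partial_i$, which is exactly the commutativity of the right-hand square in the morphism of long exact sequences. Everything else is a direct substitution into the definition of $T$ together with the formula provided by Lemma~\ref{lemma-formula}.
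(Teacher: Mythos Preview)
Your approach is essentially the same as the paper's, which simply says the result follows from the definition of $T$ and Lemma~\ref{lemma-formula}; you have supplied the details the paper omits. There is, however, one small omission: you announce that you will check \emph{each} map in the long exact sequence, but you only verify $H_i(V)\to H_i(V,V_b)$ and the boundary $\partial_i$. The remaining map $H_{i-1}(V_b)\to H_{i-1}(V)$ still needs to be checked. Since $T=\id$ on $H_{i-1}(V)$, this amounts to showing that $(T-\id)(x)$ lies in the kernel of $H_{i-1}(V_b)\to H_{i-1}(V)$ for every $x\in H_{i-1}(V_b)$. By Lemma~\ref{lemma-formula} and the same identity $\partial_i\circ\gamma_i=\partial_i$ you already used, one has $(T-\id)(x)=\partial_i L_{i-1}(x)=\partial_i\gamma_i L_{i-1}(x)$, which is in the image of $\partial_i\colon H_i(V,V_b)\to H_{i-1}(V_b)$ and hence in the kernel of $H_{i-1}(V_b)\to H_{i-1}(V)$ by exactness. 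With this one extra line your argument is complete.
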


\begin{proof} This follows directly from the definition of the operators $T$ together with
the formula in Lemma \ref{lemma-formula}.
\end{proof}

\begin{proposition} \label{prop-sum}
\begin{itemize}
\item[(i)] Assume that the morphism $\gamma _i\colon H_i(M,V_b)\to H_i(V,V_b)$ is injective. Then the image of the morphism $H_i(V)\to H_i(V,V_b)$ is the space $H_i(V,V_b)^T$
of $T$-invariants.

\item[(ii)] If $H^{2n-i-1}(V)=0$, then the map $H_i(M,V_b)\to H_i(V,V_b)$ is injective.
Hence by (i) the image of the morphism $H_i(V)\to H_i(V,V_b)$ is the space $H_i(V,V_b)^T$
of \mbox{$T$-invariants}.
\end{itemize}
\end{proposition}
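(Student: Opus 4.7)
For part~(i), I would argue formally from the definition of $T$: since $T(y)=y+\gamma_iL_{i-1}\partial_i(y)$ on $H_i(V,V_b)$, a class $y$ is $T$-invariant iff $\gamma_iL_{i-1}\partial_i(y)=0$. Combining the hypothesis that $\gamma_i$ is injective with the injectivity of $L_{i-1}$ (Proposition~\ref{injective}), this reduces to $\partial_i(y)=0$. The long exact sequence of the pair $(V,V_b)$ then identifies $\ker\partial_i$ with the image of $H_i(V)\to H_i(V,V_b)$, giving~(i).

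For part~(ii), my plan is to deduce the injectivity of $\gamma_i$ from the hypothesis $H^{2n-i-1}(V)=0$ and then invoke~(i). The main idea is to replace the non-compact $V$ by a compact oriented manifold-with-boundary so that Poincar\'e--Lefschetz duality applies. Since $C$ bounds two closed disks in $\bbP^1$, let $D_0$ be the one disjoint from $\infty$ and $D_\infty$ the other, and set $V_0:=w^{-1}(D_0)$. Properness of $w$ makes $V_0$ a compact oriented smooth $2n$-manifold with boundary $M=w^{-1}(C)$. The restriction of $w$ to $w^{-1}(D_\infty\setminus\{\infty\})$ is a proper submersion onto a critical-value-free punctured disk, so by Ehresmann's lemma it is a locally trivial fibration, and it deformation retracts onto $M$ in a way that can be arranged to fix $V_b\subset M$ pointwise. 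Extending by the identity on $V_0$ yields a deformation retraction of pairs $(V,V_b)\simeq(V_0,V_b)$, and under the resulting isomorphism $H_i(V,V_b)\cong H_i(V_0,V_b)$ the map $\gamma_i$ is identified with the natural map $H_i(M,V_b)\to H_i(V_0,V_b)$.

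Finally, the long exact sequence of the triple $(V_0,M,V_b)$,
$$H_{i+1}(V_0,M)\longrightarrow H_i(M,V_b)\longrightarrow H_i(V_0,V_b),$$
reduces injectivity of $\gamma_i$ to the vanishing $H_{i+1}(V_0,M)=0$. As $(V_0,M)$ is a compact oriented $2n$-manifold with boundary, Poincar\'e--Lefschetz duality yields $H_{i+1}(V_0,M)\cong H^{2n-i-1}(V_0)$, and the homotopy equivalence $V\simeq V_0$ identifies this with $H^{2n-i-1}(V)$, which vanishes by hypothesis. The only step I expect to require care is arranging the deformation retraction $V\simeq V_0$ so that it is constant on $V_b$ (i.e.\ so that it is a retraction of pairs $(V,V_b)\to(V_0,V_b)$); once that geometric setup is in place, the remainder of the argument is a formal combination of the triple long exact sequence with Poincar\'e--Lefschetz duality.
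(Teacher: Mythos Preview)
Your proof is correct and matches the paper's approach: part~(i) is identical, and in part~(ii) both arguments pass to the compact manifold-with-boundary $V_0=w^{-1}(D_0)$ (called $W$ in the paper), use the homotopy equivalence $V\simeq V_0$, and conclude via Lefschetz duality $H_{i+1}(V_0,M)\cong H^{2n-i-1}(V_0)\cong H^{2n-i-1}(V)$. The only minor variation is that you invoke the long exact sequence of the triple $(V_0,M,V_b)$ directly, whereas the paper first reduces injectivity of $\gamma_i$ to that of $\delta_i\colon H_i(M)\to H_i(V)$ by a diagram chase on the map between the long exact sequences of the pairs $(M,V_b)$ and $(V,V_b)$; this sidesteps your one delicate point (arranging the retraction to fix $V_b$), since only the absolute homotopy equivalence $W\simeq V$ is needed.
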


\begin{proof}
\begin{itemize}
\item[(i)]
Since $T$ acts trivially on $H_i(V)$ and the map
$H_i(V)\to H_i(V,V_b)$ is compatible with $T$, its image is contained in the
space $H_i(V,V_b)^T$. Vice versa, if $y\in H_i(V,V_b)^T$, then
$y=y+\gamma _iL_{i-1}\partial _i(y)$, i.e. $\gamma _iL_{i-1}\partial _i(y)=0$.
Hence by our assumption $L_{i-1}\partial _i(y)=0$. But $L_{i-1}$ is injective by
Proposition \ref{injective}, hence $\partial _i(y)=0$, i.e. $y$ lies in the image of the map $H_i(V)\to H_i(V,V_b)$.

\item[(ii)]
Consider the commutative diagram
$$\begin{array}{ccccccccc}
\ldots\to & H_{i}(V_b) & \to & H_i(M) & \to & H_i(M,V_b)& \stackrel{\partial _i}{\to } & H_{i-1}(V_b) & \to \ldots\\
      & \parallel   & & \downarrow \delta _i & & \downarrow \gamma _i &  & \parallel & \\
\ldots\to  &  H_{i}(V_b) & \to & H_i(V) & \to & H_i(V,V_b) & \stackrel{\partial _i}{\to } & H_{i-1}(V_b)  & \to  \ldots
\end{array}
$$
By an easy diagram chasing one sees that if $\delta _i$ is injective, then so is
$\gamma _i$.

Let $\Delta \subset \bbP ^1$ be the closed disc containing $\infty$ and bounded by the loop $C$. Let~$\Delta ^0=\Delta \setminus C$ be its interior. Finally let~$W\subset V$ be the complement of the preimage $w^{-1}(\Delta ^0)$. Then
$W$ is a compact manifold with the boundary $M$. Note that the inclusion
$W\subset V$ is a homotopy equivalence since $w$ has no singular values in~$\Delta\setminus\{\infty\}$.
Hence it suffices to prove that the map
$H_i(M)\to H_i(W)$ is injective. This map is part of the long exact sequence
$$\ldots\to H_{i+1}(W,M)\to H_i(M)\to H_i(W)\to \ldots.$$
So it suffices to prove that if $H^{2n-i-1}(W)=0$, then $H_{i+1}(W,M)=0$.
This follows from Lefschetz duality
$$H^{2n-q}(W)\simeq H_q(W,M)$$
for the compact oriented $2n$-dimensional manifold $W$ with boundary
(\cite[Theorem 6.2.20]{Sp81}).
\end{itemize}
\end{proof}

\begin{remark} Note that in the above discussion of the monodromy action on homology groups we could have considered
the homology with integral coefficients. All the results of this section are also valid in such a setup.
\end{remark}

\section{Topology of rational elliptic surfaces}
\label{section:topology}
Now we use the notation of Section~\ref{section-definition} for the special case which we will consider in the rest of the paper.
Fix a number $0\leq d\leq 9$ and let $f\colon Z\to \PP^1$ be a rational elliptic surface such that $D=D^v=f^{-1}(\infty)$ is a wheel $I_d$
of $d$ smooth rational curves for $d\geq 2$, a rational curve with one node $I_1$ for $d=1$, and a smooth elliptic curve $I_0$ for $d=0$.
Assume in addition that there exists a section $\PP^1\to E\subset Z$.
Recall that $Y=Z\setminus D$.

Since $Z$ is rational, $\chi (\cO _Z)=1$. One has $-K_Z=D$, see, for instance,~\cite[\S10.2]{ISh89}. Hence~$c_1^2(Z)=0$, so by Noether's formula the topological Euler characteristic of $Z$ is equal to $12$. This means that
$$
h^i(Z)=\left\{
                     \begin{array}{ll}
                       1, & i=0,4; \\
                       10, & i=2; \\
                       0, & \hbox{otherwise.}
                     \end{array}
                   \right.
$$
By the
adjunction formula $\left(K_Z+E\right)\cdot E=2g(E)-2=-2$, so
$E^2=-1$.

\begin{lemma}
\label{lemma:C_D}
\begin{itemize}
\item[(i)]
If $d=0$, then
$$
h^i(D)=\left\{
                     \begin{array}{ll}
                       1, & i=0,2; \\
                       2, & i=1; \\
                       0, & \hbox{otherwise.}
                     \end{array}
                   \right.
$$
\item[(ii)]
If $d>0$, then
$$
h^i(D)=\left\{
                     \begin{array}{ll}
                       1, & i=0,1; \\
                       d, & i=2; \\
                       0, & \hbox{otherwise.}
                     \end{array}
                   \right.
$$
\end{itemize}
\end{lemma}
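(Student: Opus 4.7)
The plan is to prove part (i) by invoking the well-known topology of a smooth complex elliptic curve, and to handle part (ii) uniformly for $1\leq d\leq 9$ via the normalization short exact sequence.

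For (i), $D=I_0$ is a smooth projective curve of genus one, hence diffeomorphic to the real torus $S^1\times S^1$. The stated Betti numbers $h^0=h^2=1$, $h^1=2$ then follow immediately from the Künneth formula.

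For (ii), let $\nu\colon\tilde D\to D$ be the normalization and let $S\subset D$ be the set of nodes. In every case $1\leq d\leq 9$ the normalization $\tilde D$ is a disjoint union of $d$ copies of $\mathbb{P}^1$: when $d\geq 2$ the $d$ smooth rational components are already normal and are glued transversely at $d$ nodes; when $d=1$, the nodal rational curve is the image of an immersion of a single $\mathbb{P}^1$ which identifies two distinct points, so $\tilde D=\mathbb{P}^1$ and $|S|=1=d$. Thus in all cases $\tilde D$ has $d$ connected components and $|S|=d$. I would then use the short exact sequence of sheaves on $D$
\begin{equation*}
0\to\mathbb{C}_D\to \nu_*\mathbb{C}_{\tilde D}\to \mathcal{Q}\to 0,
\end{equation*}
where $\mathcal{Q}$ is the skyscraper sheaf supported on $S$ whose stalk at each node is the cokernel $\mathbb{C}^2/\mathbb{C}$ of the diagonal embedding, so $\dim H^0(D,\mathcal{Q})=d$ and $H^{>0}(D,\mathcal{Q})=0$.

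Taking the long exact sequence of cohomology and inserting $H^0(\tilde D)\cong H^2(\tilde D)\cong\mathbb{C}^d$, $H^1(\tilde D)=0$ yields
\begin{equation*}
0\to H^0(D)\to\mathbb{C}^d\xrightarrow{\varphi}\mathbb{C}^d\to H^1(D)\to 0\to 0\to H^2(D)\to\mathbb{C}^d\to 0.
\end{equation*}
This immediately gives $H^2(D)\cong\mathbb{C}^d$. To compute $H^0$ and $H^1$ I would note that $D$ is connected, so $H^0(D)=\mathbb{C}$, hence $\ker\varphi$ is one-dimensional; by rank-nullity $\mathrm{coker}\,\varphi$ is also one-dimensional, so $H^1(D)\cong\mathbb{C}$.

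There is no real obstacle: the only mild subtlety is verifying that the $d=1$ case of a self-glued $\mathbb{P}^1$ fits into the same framework as the wheel for $d\geq 2$, which it does once one observes that in both cases $\tilde D\cong\bigsqcup_{i=1}^d\mathbb{P}^1$ and $|S|=d$. Alternatively, one could bypass $\varphi$ entirely by combining $h^0(D)=1$, $h^2(D)=d$ with the Euler characteristic computation $\chi(D)=d\cdot\chi(\mathbb{P}^1)-d\cdot\chi(\mathrm{pt})=d$ coming from the same normalization sequence, which forces $h^1(D)=1$.
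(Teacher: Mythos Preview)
Your proof is correct and follows essentially the same approach as the paper: both use the normalization short exact sequence $0\to\mathbb{C}_D\to\nu_*\mathbb{C}_{\tilde D}\to\bigoplus_{i=1}^d\mathbb{C}_{p_i}\to 0$, compute $H^\bullet(\tilde D)$ from the $d$ copies of $\mathbb{P}^1$, and read off the Betti numbers from the long exact sequence together with the connectedness of $D$. Your explicit treatment of the $d=1$ case and the alternative Euler-characteristic argument are minor additions, but the core argument is the same.
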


\begin{proof}

The part (i) is clear. Prove the part (ii).
Let $p_1,\ldots p_d$ be the intersection points of the components of $D$.
Let $\pi\colon \widetilde{D}\to D$ be the normalization. Then $\widetilde{D}$ is a disjoint union of $d$ copies of $\PP^1$.
Consider an exact sequence of sheaves on $D$
\begin{equation}\label{resol-seq}
0\to \CC_{D}\to \pi_*\pi^*\CC_{D}\to \oplus_{i=1}^d \CC_{p_i}\to 0,
\end{equation}
where $\CC_{p_i}$ is a skyscraper sheaf supported at $p_i$.
Notice that
$$
\dim H^i(D,\pi_*\pi^*\CC_{D})=\dim H^i(\widetilde{D})=\left\{
                                                    \begin{array}{ll}
                                                      d, & i=0,2; \\
                                                      0, & i=1.
                                                    \end{array}
                                                  \right.
$$
Notice also that $H^0(D,\CC_{D})=\CC$ and the map $H^0(D,\CC_{D})\to H^0(D,\pi_*\pi^*\CC_{D})$ is injective.
The lemma now follows from the long exact sequence of cohomology applied to the short exact sequence~\eqref{resol-seq}.
\end{proof}

\begin{lemma}
\label{lemma:ZtoD}
The restriction map $s\colon H^2(Z)\to H^2(D)$ is surjective.
\end{lemma}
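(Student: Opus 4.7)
The plan is to reduce the claim to an elementary intersection-theoretic calculation on the elliptic surface $Z$.

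First, for $d\geq 1$ I would use the isomorphism $\pi^{*}\colon H^{2}(D)\xrightarrow{\sim}H^{2}(\widetilde{D})=\bigoplus_{i=1}^{d}H^{2}(D_{i})\cong\CC^{d}$ established in the proof of Lemma~\ref{lemma:C_D}, together with the generators of each $H^{2}(D_{i})\cong\CC$ dual to the fundamental class $[D_{i}]$. Under this identification, functoriality of cohomological restriction and the projection formula turn the composition $\pi^{*}\circ s$ into the intersection-number map
\[
\alpha\longmapsto(\alpha\cdot D_{1},\ldots,\alpha\cdot D_{d}).
\]
Hence it suffices to exhibit classes in $H^{2}(Z)$ whose intersection vectors span $\CC^{d}$. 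The case $d=0$ is analogous, with the single coordinate $\alpha\mapsto\alpha\cdot D$.

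Next, I would use the section $E$ and the components $D_{j}$ themselves. For $d=0$, the equality $[E]\cdot D=1$ immediately produces a generator. For $d\geq 1$:
\emph{(i)} since $E$ is a section meeting the reducible fiber $D$ transversally at a single point lying on some component $D_{i_{0}}$, the image $s([E])$ equals the standard basis vector $e_{i_{0}}$, whose coordinate sum is $1$;
\emph{(ii)} since $D=f^{-1}(\infty)$ is linearly equivalent to a general (smooth) fiber of $f$ that is disjoint from every $D_{j}$, one has $D\cdot D_{j}=0$, so each vector $s([D_{j}])=(D_{j}\cdot D_{i})_{i=1}^{d}$ lies in the hyperplane $H=\{v\in\CC^{d}:\sum_{i}v_{i}=0\}$.

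Finally, I would check that $s([D_{1}]),\ldots,s([D_{d}])$ actually span all of $H$. This is an elementary linear-algebra check on the symmetric intersection matrix $M_{ij}=D_{i}\cdot D_{j}$: for $d\geq 3$ it is the negative of the Laplacian of the cyclic graph on $d$ vertices (diagonal entries $-2$ and entries $1$ on the two cyclic off-diagonals, zero elsewhere), and the small cases $d=1$ (where $M=(0)$) and $d=2$ (where $M=\bigl(\begin{smallmatrix}-2 & 2\\ 2 & -2\end{smallmatrix}\bigr)$) are handled directly. In every case $\rk M=d-1$ with kernel spanned by $(1,\ldots,1)^{T}$, so the column span is precisely $H$. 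Combining with $s([E])=e_{i_{0}}\notin H$ then yields all of $\CC^{d}$, proving surjectivity.

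The only point requiring some care is the identification of $\pi^{*}\circ s$ with the intersection-number map and the verification that $M$ has the claimed rank in the degenerate small-$d$ cases; the remaining linear algebra is immediate.
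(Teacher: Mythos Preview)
Your proof is correct and follows essentially the same approach as the paper: both reduce to intersection theory on $Z$, identify $H^{2}(D)\cong\CC^{d}$ via the degree map on components, and exhibit generators using the fiber components $D_{i}$ together with the section $E$. The only difference is bookkeeping in the final linear-algebra step: the paper uses $D_{1},\ldots,D_{d-1},E$ and computes the determinant $(-1)^{d-1}d$ of the resulting $d\times d$ intersection matrix directly, whereas you use all $D_{1},\ldots,D_{d}$ to span the hyperplane $H=\{\sum v_{i}=0\}$ via the graph-Laplacian rank argument and then add $E$ to escape $H$. Your treatment of the small cases $d=1,2$ (where $D_{1}\cdot D_{2}=2$ rather than $1$) is slightly more explicit than the paper's.
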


\begin{proof} Since $Z$ is a rational surface we have $NS(Z)\otimes \CC=H^2(Z)$. Also we have~$NS(D)\otimes \CC=H^2(D)$ since $H^2(D,\cO _D)=0$.
Therefore it suffices to prove that the restriction map
$$
NS(Z)\otimes\QQ\to NS(D)\otimes \QQ
$$
is surjective.

If $d$ is $0$ or $1$, the curve $D$ is irreducible, and the space $NS(D)\otimes \QQ$ is one-dimensional and is spanned by the first Chern class of any ample line bundle. So it's suffices to take an ample line bundle on $Z$ and restrict it to $D$.

So assume that $d\geq 2$, which means that $D$ is a wheel of smooth rational curves. Let $D_1,\ldots,D_d$
be its irreducible components.
First notice that there is an isomorphism~$NS(D)\cong \ZZ^d$ given by the map
$$
\pic(D)\ni \mathcal L \to (\deg \left.\mathcal L\right|_{D_1},\ldots,\deg \left.\mathcal L\right|_{D_d}).
$$
Then, for any $i$ one has
$$
-2=\deg K_{D_i}=D_i\cdot(D_i+K_Z)=D_i^2.
$$
This means that the Gram matrix $(D_i\cdot D_j)$ is equal to 
$$\left(
  \begin{array}{rrrrrr}
    -2 & 1 & 0 & 0 & \ldots & 1 \\
    1 & -2 & 1 & 0 & \ldots & 0 \\
    0 & 1 & -2 & 1 & \ldots & 0 \\
    \ldots & \ldots & \ldots & \ldots & \ldots & \ldots \\
    0 & \ldots & 0 & 1 & -2 & 1 \\
    1 & \ldots & 0 & 0 & 1 & -2 \\
  \end{array}
\right).
$$
To prove the asserted surjectivity it suffices to find divisors $F_1,\ldots,F_d$ on the surface $Z$, such that the intersection
matrix $(F_i\cdot D_j)$ is non-degenerate.
The section $E$ intersects a unique component of $D$, say $D_d$, since $E\cdot D=1$.
So we have $E\cdot D_d=1$ and $E\cdot D_i=0$ for $i\neq d$.
Taking $F_1=D_1,\ldots, F_{d-1}=D_{d-1}, F_d=E$ one gets the intersection matrix $(F_i\cdot D_j)$ is
$$\left(
  \begin{array}{rrrrrr}
    -2 & 1 & 0 & 0 & \ldots & 0 \\
    1 & -2 & 1 & 0 & \ldots & 0 \\
    0 & 1 & -2 & 1 & \ldots & 0 \\
    \ldots & \ldots & \ldots & \ldots & \ldots & \ldots \\
    0 & \ldots & 0 & 1 & -2 & 0 \\
    0 & \ldots & 0 & 0 & 0 & 1 \\
  \end{array}
\right),
$$
whose determinant is equal to $(-1)^{d-1}d$.
\end{proof}

Next we compute the cohomology $H_c^i(Y)$ of $Y$ with compact support.

\begin{lemma}
\label{lemma:C_Y}
The following equalities hold.
$$h^i_c(Y)=
h^i(Z,j_!\CC_Y)=\left\{
                     \begin{array}{ll}
                       0, & i=0,1,3; \\
                       11-d, & i=2; \\
                       1, & i=4.
                     \end{array}
                   \right.
$$
\end{lemma}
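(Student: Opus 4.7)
The plan is to exploit the long exact sequence of compactly supported cohomology associated with the open/closed decomposition $Y \stackrel{j}{\hookrightarrow} Z \stackrel{i}{\hookleftarrow} D$. From the distinguished triangle $j_!\CC_Y \to \CC_Z \to i_*\CC_D \to j_!\CC_Y[1]$ we obtain
$$\cdots \to H^{i-1}(Z) \to H^{i-1}(D) \to H^i_c(Y) \to H^i(Z) \to H^i(D) \to \cdots,$$
and the cohomology groups of $Z$ and $D$ have already been computed above (for $Z$ using Noether's formula together with rationality, and for $D$ in Lemma~\ref{lemma:C_D}).

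Feeding in these values, I would handle the sequence degree by degree. In degree $0$, the restriction $H^0(Z) \to H^0(D)$ is an isomorphism because $Z$ is connected and $D$ is non-empty and connected in the sense that $H^0(D) = \CC$ holds in each case $d = 0$ and $d \geq 1$; this immediately forces $H^0_c(Y) = 0$ and also $H^1_c(Y) = 0$ since the next term $H^1(Z)$ vanishes. In degree $2$ the sequence reads
$$0 \to H^1(D) \to H^2_c(Y) \to H^2(Z) \to H^2(D),$$
since $H^1(Z) = 0$, and the final arrow is surjective by Lemma~\ref{lemma:ZtoD}. Taking alternating sums of dimensions gives
$$\dim H^2_c(Y) = \dim H^1(D) + \dim H^2(Z) - \dim H^2(D),$$
which equals $1 + 10 - d = 11 - d$ when $d \geq 1$ and $2 + 10 - 1 = 11$ when $d = 0$; both uniformly $11-d$, as claimed.

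Surjectivity of $H^2(Z) \to H^2(D)$ (from Lemma~\ref{lemma:ZtoD}) is again the crucial input at the next step: it forces the connecting map $H^2(D) \to H^3_c(Y)$ to be zero, and since $H^3(Z) = 0$ as well, $H^3_c(Y) = 0$. Finally, in degree $4$ the relevant piece is $H^3(D) \to H^4_c(Y) \to H^4(Z) \to H^4(D)$, and since $H^3(D) = H^4(D) = 0$ (as $D$ is one-dimensional) and $H^4(Z) = \CC$, we conclude $H^4_c(Y) = \CC$.

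The calculation is essentially bookkeeping once the long exact sequence is in place; the only non-trivial ingredient is the surjectivity of the restriction $H^2(Z) \to H^2(D)$ supplied by the previous lemma, and that is what collapses the sequence cleanly and makes $H^3_c(Y)$ vanish. The identification $H^i_c(Y) = H^i(Z, j_!\CC_Y)$ is of course just the definition of compactly supported cohomology.
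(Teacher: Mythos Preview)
Your proof is correct and follows essentially the same approach as the paper: both use the long exact sequence arising from the short exact sequence $0\to j_!\CC_Y\to \CC_Z\to \CC_D\to 0$, feed in the known cohomology of $Z$ and $D$, and invoke Lemma~\ref{lemma:ZtoD} for the surjectivity of $H^2(Z)\to H^2(D)$ as the one non-formal input. The paper presents the bookkeeping as two tables (one for $d=0$, one for $d>0$), while you do it degree by degree in prose, but the argument is the same.
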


\begin{proof} The first equality follows from the fact that $Z$ is compact. For the second one
consider the short exact sequence of sheaves
$$
0\to j_!\CC_{Y}\to \CC_{Z}\to \CC_{D}\to 0.
$$

For $d=0$ by Lemma~\ref{lemma:C_D}(i) the induced long exact sequence of cohomology $H^\bullet(Z,-)$ looks like

\[\xymatrix@R=0.5pc {
 &0 \ar[r] & j_!\CC_{Y}\ar[r] & \CC_{Z}\ar[r] & \CC_{D}\ar[r] & 0\\
H^0 && 0 & \CC \ar[r]^r       & \CC &\\
H^1 && ?_1 & 0         & \CC^2 &\\
H^2 && ?_2 & \CC^{10} \ar[r]^s  & \CC &\\
H^3 && ?_3 & 0         & 0 &\\
H^4 && \CC & \CC       & 0. &\\
}\]

For $d>0$ by Lemma~\ref{lemma:C_D}(ii) the same long exact sequence looks like

\[\xymatrix@R=0.5pc {
 &0 \ar[r] & j_!\CC_{Y}\ar[r] & \CC_{Z}\ar[r] & \CC_{D}\ar[r] & 0\\
H^0 && 0 & \CC \ar[r]^r       & \CC &\\
H^1 && ?_1 & 0         & \CC &\\
H^2 && ?_2 & \CC^{10} \ar[r]^s  & \CC^d &\\
H^3 && ?_3 & 0         & 0 &\\
H^4 && \CC & \CC       & 0. &\\
}\]
The maps $r$ in both cases are obviously surjective, hence $?_1=0$.
By Lemma~\ref{lemma:ZtoD} the maps $s$ are surjective.
Hence $?_2=\CC^{11-d}$, $?_3=0$.
\end{proof}

\begin{corollary}\label{coho-of-y}
By Poincare duality for $Y$ one has
\begin{equation*}h^i(Y)=\left\{ \begin{array} {rl}  1, & \text{if $i=0$;} \\
                       11-d, & \text{if $i=2$;}\\
                        0, & \text{if $i=1,3,4$.}
                        \end{array}
                        \right.
\end{equation*}
\end{corollary}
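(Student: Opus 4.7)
The statement is an immediate consequence of Poincaré duality together with Lemma \ref{lemma:C_Y}, so the plan is almost mechanical. I would proceed as follows.

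First I would record that $Y$ is a smooth quasi-projective complex surface (as a Zariski open subset of the smooth projective surface $Z$), hence a smooth orientable real manifold of real dimension $4$, connected and non-compact. For such a manifold Poincaré duality (in the form stated e.g.\ in \cite[Theorem 6.2.19]{Sp81}) gives a perfect pairing
\begin{equation*}
H^i(Y;\bbC)\times H_c^{4-i}(Y;\bbC)\longrightarrow \bbC,
\end{equation*}
so in particular $\dim H^i(Y)=\dim H_c^{4-i}(Y)$ for every $i$.

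Next I would simply read off the four values. Using Lemma \ref{lemma:C_Y} one has $h_c^0(Y)=0$, $h_c^1(Y)=0$, $h_c^2(Y)=11-d$, $h_c^3(Y)=0$, $h_c^4(Y)=1$. Applying Poincaré duality index-by-index yields $h^4(Y)=h_c^0(Y)=0$, $h^3(Y)=h_c^1(Y)=0$, $h^2(Y)=h_c^2(Y)=11-d$, $h^1(Y)=h_c^3(Y)=0$, and $h^0(Y)=h_c^4(Y)=1$, which is exactly the table asserted in the corollary.

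There is essentially no obstacle to this argument; the only point worth a remark is the invocation of Poincaré duality for a non-compact (but orientable) manifold, which is precisely the form of duality already cited in the proof of Proposition \ref{prop-sum}(ii), so the reference is consistent with what the paper has been using. The genuine content of the corollary is all contained in the computation of $H^i_c(Y)$ carried out in Lemma \ref{lemma:C_Y}, which itself rests on the surjectivity established in Lemma \ref{lemma:ZtoD}; the corollary itself is just a duality restatement.
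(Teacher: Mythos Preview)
Your proposal is correct and is exactly the argument the paper intends: the corollary is stated with the words ``By Poincar\'e duality for $Y$'' and no further proof is given, so the content is precisely the duality $h^i(Y)=h_c^{4-i}(Y)$ combined with the values computed in Lemma~\ref{lemma:C_Y}. Your write-up simply makes this explicit.
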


\section{Landau--Ginzburg Hodge numbers for rational elliptic surfaces}
\label{section:LG-Hodge-numbers-for-surfaces}

\subsection{The numbers $h^{p,q}(Y,w)$}
We keep the notation of Section~\ref{section:topology}.

Consider the long exact sequence of homology
\begin{equation*} \label{seq4} \ldots\to  H_2(Y)  \to  H_2(Y,Y_b)  \to   H_{1}(Y_b)   \to  \ldots
\end{equation*}
Recall that there is a compatible action of the monodromy $T$ on each term of this sequence as explained in Section~\ref{section:monodromy}.

\begin{corollary}\label{cor-of-general} The image of the map $H_2(Y)  \to  H_2(Y,Y_b)$ coincides with the space $H_2(Y,Y_b)^T$
of $T$-invariants.
\end{corollary}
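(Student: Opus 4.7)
The plan is to invoke Proposition \ref{prop-sum}(ii) directly with $V=Y$, $n=2$, and $i=2$. To apply that proposition I first need to verify its two hypotheses: (a) the morphism $w\colon Y\to\bbC$ is proper, and (b) $H^{2n-i-1}(Y)=H^{1}(Y)=0$.

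For hypothesis (a), the key observation is that in the situation of Theorem~\ref{theorem:main} (which is what governs Section~\ref{section:topology}), the horizontal divisor $D^h$ is empty, so $D=D^v=f^{-1}(\infty)$ and the preimage under $w=f|_Y$ of any compact set in $\bbC$ is contained in the compact set $Z\setminus f^{-1}(\text{disc around }\infty)$. Hence $w$ is proper and the machinery of Section~\ref{section:monodromy} applies verbatim. For hypothesis (b), Corollary~\ref{coho-of-y} gives $h^1(Y)=0$.

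With both hypotheses in place, Proposition~\ref{prop-sum}(ii) immediately yields that the map $H_2(M,Y_b)\to H_2(Y,Y_b)$ is injective, and then Proposition~\ref{prop-sum}(i) identifies the image of $H_2(Y)\to H_2(Y,Y_b)$ with $H_2(Y,Y_b)^T$, which is exactly the claim.

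I do not anticipate any obstacle here: this corollary is simply the specialization of the general homological framework from Section~\ref{section:monodromy} to the case at hand, where the required cohomological vanishing has just been computed in the previous section. The only thing worth flagging explicitly in the writeup is the properness of $w$, which relies on the absence of horizontal divisor components for the rational elliptic surfaces under consideration.
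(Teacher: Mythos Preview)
Your proposal is correct and follows essentially the same approach as the paper's own proof: both invoke Proposition~\ref{prop-sum}(ii) with $n=i=2$ and the vanishing $H^1(Y)=0$ from Corollary~\ref{coho-of-y}. Your explicit verification of the properness of $w$ (from $D^h=\emptyset$) is a welcome addition, though in the paper this is already built into the standing assumptions of Section~\ref{section:topology}.
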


\begin{proof} In the notation of Proposition~\ref{prop-sum} we have $n=2$, $i=2$, and $H^{2n-i-1}(Y)=H^1(Y)=0$, see Corollary \ref{coho-of-y}. Hence the assertion follows from
 Proposition \ref{prop-sum}(ii).
\end{proof}

\begin{proposition}
\label{proposition:h-numbers}
\begin{enumerate}
\item[(i)]
We have \begin{equation}\label{fact-coh}H^k(Y,Y_b)=
\left\{\begin{array}{rl}
                       \CC^{12-d}, & k=2; \\
                       0, & \hbox{otherwise.}
\end{array}\right.
\end{equation}

\item[(ii)] For $d>0$ the Landau--Ginzburg model $(Y,w)$ is of Fano type and
\begin{equation}\label{eq-hpq} h^{p,q}(Y,w)=\left\{\begin{array}{rl}
                       1, & (p,q)=(0,2),(2,0); \\
                       10-d, & (p,q)=(1,1); \\
                       0, & \hbox{otherwise.}
\end{array}\right.
\end{equation}

\item[(iii)] For $d=0$ the Landau--Ginzburg model $(Y,w)$ is not of Fano type. More precisely, the $T$-action on $H^2(Y,Y_b)$ has 2 Jordan blocks of size 2 and 8 blocks of size 1.
(So no blocks of size 3).
\end{enumerate}
\end{proposition}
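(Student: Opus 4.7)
For part (i), the plan is to apply the long exact sequence in cohomology of the pair $(Y,Y_b)$,
$$
\ldots\to H^{k-1}(Y_b)\to H^k(Y,Y_b)\to H^k(Y)\to H^k(Y_b)\to\ldots,
$$
using $H^\bullet(Y)=(\CC,0,\CC^{11-d},0,0)$ from Corollary~\ref{coho-of-y} and $H^\bullet(Y_b)=(\CC,\CC^2,\CC)$ for the smooth elliptic fiber. The only non-formal step is surjectivity of $H^2(Y)\to H^2(Y_b)\cong\CC$: the section $E$ restricts to a closed curve $E|_Y\cong\Aff^1\subset Y$ meeting $Y_b$ transversely in one point, and its Borel--Moore fundamental class, which is Poincar\'e dual to a class in $H^2(Y)$, pulls back to the generator of $H^2(Y_b)$. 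Exactness then pins down $H^k(Y,Y_b)=0$ for $k\neq 2$ and an alternating-sum count gives $\dim H^2(Y,Y_b)=12-d$.

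For the Fano-type assertion in (ii), set $V:=H^2(Y,Y_b)$ and analyze $T$ on $V$. The cohomology LES above is $T$-equivariant, and $T$ is trivial on $H^\bullet(Y)$, so for every $v\in V$ the image $j^\ast(v)$ in $H^2(Y)$ is $T$-fixed, forcing $(T-\id)(V)\subseteq\ker(j^\ast)=\delta(H^1(Y_b))$, a subspace of dimension $2$. Corollary~\ref{cor-of-general} (applicable because $H^1(Y)=0$) gives $\dim V^T=\dim\mathrm{im}(H_2(Y)\to H_2(Y,Y_b))$, and the homology LES of the pair combined with $H_1(Y)=0$ forces $[Y_b]\neq 0$ in $H_2(Y)$ (otherwise the connecting map could not surject onto $H_1(Y_b)=\CC^2$), so this image has dimension $(11-d)-1=10-d$ and $\rk(T-\id)=2$. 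Hence $(T-\id)(V)=\delta(H^1(Y_b))$ in full. By $T$-equivariance of $\delta$, the induced action of $(T-\id)$ on $\delta(H^1(Y_b))$ is identified with the monodromy $N_1:=(T-\id)|_{H^1(Y_b)}$ around the $I_d$ fiber at infinity, which by Kodaira's classification has rank $1$ and square zero for $d\geq 1$. Therefore $(T-\id)^2V=\delta(N_1H^1(Y_b))$ is $1$-dimensional and $(T-\id)^3V=\delta(N_1^2H^1(Y_b))=0$. This yields $N^2\neq 0$ and $N^3=0$, establishing the Fano-type condition.

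To compute the Hodge numbers in (ii), the rank data $\bigl(\rk(T-\id)^k\bigr)_{k=0,1,2,3}=(12-d,2,1,0)$ forces the Jordan decomposition of $T$ on $V$ to consist of one block of size $3$ together with $9-d$ trivial blocks. The monodromy weight filtration $W_\bullet(N,2)$ centered at $n=2$ then has graded pieces of dimensions $(1,0,10-d,0,1)$ in weights $(0,1,2,3,4)$. Since $H^{n+a}(Y,Y_b)\neq 0$ only for $a=0$, Definition~\ref{definition:hpq} forces $h^{p,q}(Y,w)=0$ whenever $p\neq q$, while for $p=q$ it reads $h^{p,2-p}(Y,w)=\dim\mathrm{gr}^{W,2}_{2(2-p)}V$. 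Reading off the graded pieces yields $h^{0,2}=h^{2,0}=1$ and $h^{1,1}=10-d$, matching~\eqref{eq-hpq}.

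For part (iii), the argument runs identically, but the key geometric input changes: when $d=0$ the fiber $D$ is smooth, so $f$ is a proper submersion in a neighborhood of $\infty$ and Ehresmann's theorem supplies a local $C^\infty$-trivialization, making the geometric monodromy isotopic to $\id$ on $Y_b$. Hence $N_1=0$ on $H^1(Y_b)$, and the same inclusion $(T-\id)(V)\subseteq\delta(H^1(Y_b))$ combined with the identification of the action on $\delta(H^1(Y_b))$ with $N_1$ gives $(T-\id)^2=0$ on $V$, so $N^2=0$ and the Fano condition fails. The main technical obstacle I anticipate is confirming $\rk(T-\id)=2$ exactly: the easy upper bound comes from the containment in $\delta(H^1(Y_b))$, but the matching lower bound requires the nonvanishing of $[Y_b]\in H_2(Y)$, which in turn depends on $H_1(Y)=0$.
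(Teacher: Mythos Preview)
Your proposal is correct and follows essentially the same route as the paper: both use the $T$-equivariant exact sequence $0\to H_2(Y_b)\to H_2(Y)\to H_2(Y,Y_b)\to H_1(Y_b)\to 0$, Corollary~\ref{cor-of-general} to identify the image of $H_2(Y)$ with the $T$-invariants, and Kodaira's monodromy table for $I_d$ fibers to pin down the Jordan type on $H_2(Y,Y_b)$. The paper reads off the Jordan decomposition directly from this sequence, while you compute it via the ranks of $(T-\id)^k$; and the paper uses an ample line bundle (Lemma~\ref{lemma-surj}) rather than the section class for the surjectivity of $H^2(Y)\to H^2(Y_b)$ --- both cosmetic differences.

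One small correction: your parenthetical justification for $[Y_b]\neq 0$ in $H_2(Y)$ (``otherwise the connecting map could not surject onto $H_1(Y_b)$'') is backwards --- that surjectivity follows from $H_1(Y)=0$ alone and has nothing to do with $[Y_b]$. The actual reason $[Y_b]\neq 0$ is that $H_2(Y_b)\to H_2(Y)$ is injective, which is precisely the dual of the surjectivity $H^2(Y)\twoheadrightarrow H^2(Y_b)$ you already established in part~(i) (equivalently, $H_3(Y,Y_b)=0$ gives exactness on the left of the homology sequence). So your ``anticipated obstacle'' is in fact already handled by part~(i), not by $H_1(Y)=0$.
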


This proposition proves Theorem~\ref{theorem:main}(iii) and computes the right hand side of the equality of Theorem~\ref{theorem:main}(i).

The proof of the proposition will occupy the rest of this subsection.

\begin{lemma} \label{lemma-surj} The restriction map $H^2(Y)\to H^2(Y_b)$ is surjective. Hence the map~$H_2(Y_b)\to H_2(Y)$
is injective.
\end{lemma}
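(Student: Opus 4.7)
The plan is to exhibit an explicit class in $H^2(Y)$ whose image in $H^2(Y_b)$ generates that one-dimensional space. Since $Y_b$ is a smooth elliptic curve, $H^2(Y_b)\cong\CC$ is generated by the Poincar\'e dual of any point, so any single class with nonzero restriction will do.

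The natural candidate is the cohomology class of the section $E\subset Z$. By hypothesis $E$ is a section of $f$, so $E$ meets every smooth fiber of $f$ transversally in exactly one point; in particular $E\cap Y_b$ is a single point. First I would observe that by naturality of pullback the composite
$$H^2(Z)\to H^2(Y)\to H^2(Y_b)$$
equals the restriction map induced by the inclusion $Y_b\hookrightarrow Z$. Then the standard transversality argument identifies the image of $[E]\in H^2(Z)$ in $H^2(Y_b)$ with the Poincar\'e dual (taken inside $Y_b$) of the zero-cycle $E\cap Y_b$. This image is therefore a generator of $H^2(Y_b)$, and hence the restriction $H^2(Y)\to H^2(Y_b)$ is surjective.

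For the second assertion, since we work with $\CC$-coefficients and all cohomology and homology groups in sight are finite-dimensional (Corollary~\ref{coho-of-y} together with compactness of $Y_b$), the pairing between singular homology and cohomology identifies the restriction map $H^2(Y)\to H^2(Y_b)$ with the linear dual of the pushforward $H_2(Y_b)\to H_2(Y)$. A linear map between finite-dimensional vector spaces is surjective if and only if its dual is injective, so the injectivity of $H_2(Y_b)\to H_2(Y)$ is immediate from the surjectivity just established.

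The only mildly delicate point is the step identifying the restriction of $[E]$ to $Y_b$ with the Poincar\'e dual of the intersection point; but this is routine intersection-theoretic bookkeeping given that $E$ and $Y_b$ meet transversally in the smooth ambient surface $Z$. Everything else is formal.
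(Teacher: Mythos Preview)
Your argument is correct and follows essentially the same strategy as the paper: exhibit a single class in $H^2(Y)$ whose restriction generates the one-dimensional space $H^2(Y_b)$. The paper picks the first Chern class of an ample line bundle on the quasi-projective variety $Y$ (whose restriction to the projective fiber $Y_b$ is again ample, hence nonzero in $H^2(Y_b)$), whereas you use the class of the section $E$ pulled through $H^2(Z)\to H^2(Y)$ and the transversal intersection $E\cdot Y_b=1$. Both choices work for the same reason, and your duality argument for the homology statement is the standard one the paper leaves implicit.
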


\begin{proof} Since $Y_b$ is a smooth projective curve, $H^2(Y_b)$ has dimension one and is spanned by the first Chern class $c_1(L)$ of any ample line bundle $L$ on $Y_b$. It suffices to take any ample line bundle $M$ on $Y$, so that its restriction
$L=M\vert _{Y_b}$ is also ample and $c_1(M)\in H^2(Y)$ restricts to $c_1(L)\in H^2(Y_b)$.
\end{proof}

The equation \eqref{fact-coh} now follows from the long exact sequence of cohomology
$$\ldots\to H^i(Y,Y_b)\to H^i(Y)\to H^i(Y_b)\to \ldots $$ using Corollary~\ref{coho-of-y}, the fact that $Y_b$ is an elliptic curve, and
Lemma \ref{lemma-surj}. This proves part (i) of the proposition.

To prove parts (ii) and (iii) it remains to understand the action of the monodromy $T$ on~$H_2(Y,Y_b)$.

Consider the part of the long exact sequence of homology
\begin{equation*}\label{piece-of-long-homology-1}
H_3(Y,Y_b)\to H_2(Y_b)\to H_2(Y)\to H_2(Y,Y_b)\to H_1(Y_b)\to H_1(Y).
\end{equation*}
We know that the map $H_2(Y_b)\to H_2(Y)$ is injective and that $H_1(Y)=H^1(Y)^\vee=0$. Hence the sequence
\begin{equation}\label{piece-of-long-homology-2}
0\to H_2(Y_b)\to H_2(Y)\to H_2(Y,Y_b)\to H_1(Y_b)\to 0
\end{equation}
is also exact. We have $H_2(Y_b)=\CC$, $H_1(Y_b)=\CC ^2$, $H_2(Y)=\CC^{11-d}$, hence the sequence
\eqref{piece-of-long-homology-2} is isomorphic to
\begin{equation*}\label{piece-of-long-homology-3}
0\to \bbC \to \bbC ^{11-d}\to \bbC ^{12-d}\to \bbC ^{2} \to 0.
\end{equation*}

These sequences are $T$-equivariant, where $T$ acts trivially on $H_2(Y_b)$ and $H_2(Y)$. By Landman's theorem
$T$ acts unipotently on $H_1(Y_b)$.

For $d=0$ the fiber $f^{-1}(\infty)$ is smooth, hence the action of $T$ on $H_1(Y_b)$ is trivial.
Therefore the exact sequence~\eqref{piece-of-long-homology-2} and Corollary \ref{cor-of-general} imply that the $T$-action on $H_2(Y,Y_b)$ is unipotent with two Jordan blocks of size $2$ and eight blocks of size $1$. This means that the Landau--Ginzburg model $(Y,w)$ is not of Fano type, which proves (iii).

For $d>0$ the fiber $f^{-1}(\infty)$ is singular, so the $T$-action on $H_1(Y_b)$ is nontrivial (see~\cite[Table $1$]{Ko63}).
Therefore the exact sequence~\eqref{piece-of-long-homology-2} and Corollary \ref{cor-of-general} imply that the $T$-action on~$H_2(Y,Y_b)$ is unipotent with one Jordan block of size $3$ and $9-d$ blocks of size $1$. Therefore~$(Y,w)$ is of Fano type and equations \eqref{eq-hpq} hold. This completes the proof of  Proposition~\ref{proposition:h-numbers}.

\subsection{The numbers $f^{p,q}(Y,w)$}
Recall that we have the open embedding $j\colon Y\hookrightarrow Z$.

\begin{lemma}
\label{lemma:log_cohomology} We have
$$\Omega^0_Z(\logdiv D)\cong\cO_Z \quad \text{and} \quad \Omega^2_Z(\logdiv D)\cong\cO_Z.$$
Hence
$$\Omega^0_Z(\logdiv D)(-D)\cong\Omega^2_Z(\logdiv D)(-D)\cong\omega _Z.$$
\end{lemma}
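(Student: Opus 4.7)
The plan is to reduce both asserted identifications to the standard local description of logarithmic forms together with the identity $-K_Z=D$ already used earlier in the paper.

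The identification $\Omega^0_Z(\log D)=\cO_Z$ is immediate from the definition of the logarithmic de Rham complex recalled in Subsection~\ref{subs-def-fpq}. For the identification $\Omega^2_Z(\log D)=\cO_Z$, I would first establish the general identity $\Omega^n_Z(\log D)=\omega_Z(D)$ for any smooth $n$-dimensional variety $Z$ with a reduced normal crossings divisor $D$, and then specialize to $n=2$. The general identity is local: in coordinates $z_1,\ldots,z_n$ for which $D=\{z_1\cdots z_k=0\}$, the sheaf $\Omega^n_Z(\log D)$ is generated by the single section
\[
\frac{dz_1}{z_1}\wedge\cdots\wedge\frac{dz_k}{z_k}\wedge dz_{k+1}\wedge\cdots\wedge dz_n=\frac{dz_1\wedge\cdots\wedge dz_n}{z_1\cdots z_k},
\]
which exhibits $\Omega^n_Z(\log D)$ as the $\cO_Z$-module $\omega_Z\otimes\cO_Z(D)$.

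Specializing to our setting with $n=2$, I invoke the equality $-K_Z=D$ recorded in Section~\ref{section:topology}: this gives $\cO_Z(D)\cong\omega_Z^{-1}$, so
\[
\Omega^2_Z(\log D)=\omega_Z\otimes\cO_Z(D)\cong\omega_Z\otimes\omega_Z^{-1}=\cO_Z,
\]
as claimed.

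Finally, to obtain the two stated consequences, I twist both identifications by $\cO_Z(-D)$. Using $-D=K_Z$ once more, we get
\[
\Omega^0_Z(\log D)(-D)=\cO_Z(-D)=\cO_Z(K_Z)=\omega_Z,
\]
and similarly $\Omega^2_Z(\log D)(-D)=\cO_Z(-D)=\omega_Z$. No step here is a real obstacle; the only content beyond the local computation of logarithmic forms is the input $-K_Z=D$, which is a standard fact about rational elliptic surfaces with a wheel (or smooth elliptic) fiber at infinity that was already used in the topology section.
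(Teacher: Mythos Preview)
Your proof is correct and follows essentially the same approach as the paper: the paper's one-line proof simply cites the definition of the logarithmic complex and the fact that $D$ is anticanonical, and your argument spells out exactly these two ingredients (the local generator computation giving $\Omega^n_Z(\log D)=\omega_Z(D)$, and the identity $-K_Z=D$).
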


\begin{proof} This follows from the definition of the logarithmic complex in Subsection \ref{subs-def-fpq}
and the fact that $D$ is the anticanonical divisor.
\end{proof}

\begin{proposition}\label{prop-fpq}
The following equalities hold.
\begin{equation}\label{first-two}
h^i(Z,\Omega^0_{Z}(\logdiv D)(-D))=h^i(Z,\Omega^2_{Z}(\logdiv D)(-D))
=\left\{                                                                   \begin{array}{ll}
                                                                              0, & \hbox{i=0,1;} \\
                                                                                        1, & \hbox{i=2,}
                                                                                      \end{array}                                                                                  \right.
\end{equation}
\begin{equation}\label{last-one}
h^i(Z,\Omega^1_Z(\logdiv D)(-D))=\left\{
                     \begin{array}{ll}
                       0, & \hbox{i=0,2;} \\
                       10-d, & \hbox{i=1.}
                     \end{array}
                   \right.
\end{equation}
\end{proposition}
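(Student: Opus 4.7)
The plan is to handle the first two formulas by direct Serre duality on $\omega_Z$, and to reduce the third to the $E_1$-degeneration of the Hodge--de Rham spectral sequence for the open surface $Y$.

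By Lemma~\ref{lemma:log_cohomology} both $\Omega^0_Z(log\,D)(-D)$ and $\Omega^2_Z(log\,D)(-D)$ coincide with $\omega_Z$. Since $Z$ is rational, $h^1(Z,\cO_Z)=0$, and the equality $\chi(\cO_Z)=1$ then forces $h^2(Z,\cO_Z)=0$. Serre duality on the smooth projective surface $Z$ gives $h^i(Z,\omega_Z)=h^{2-i}(Z,\cO_Z)$, which is exactly the pattern $0,0,1$ for $i=0,1,2$ asserted in~\eqref{first-two}.

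For~\eqref{last-one} I would first observe that the wedge pairing
\[
\Omega^1_Z(log\,D)\otimes \Omega^1_Z(log\,D)\to \Omega^2_Z(log\,D)=\cO_Z
\]
is perfect, so $\Omega^1_Z(log\,D)$ is self-dual as a locally free sheaf (the right-hand equality is again Lemma~\ref{lemma:log_cohomology}, i.e.\ a restatement of $K_Z+D=0$). Serre duality then identifies
\[
h^i(Z,\Omega^1_Z(log\,D)(-D))=h^{2-i}(Z,\Omega^1_Z(log\,D)),
\]
and the problem is reduced to computing $h^q(Z,\Omega^1_Z(log\,D))$ for $q=0,1,2$.

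To pin these down I would invoke Deligne's theorem that the spectral sequence $E_1^{p,q}=H^q(Z,\Omega^p_Z(log\,D))\Rightarrow H^{p+q}(Y)$ degenerates at $E_1$. The entries on the diagonals with $p\neq 1$ are already known: $h^q(\Omega^0_Z(log\,D))=h^q(\Omega^2_Z(log\,D))=h^q(\cO_Z)$ vanishes except in degree $q=0$, where it equals $1$. Comparing diagonal sums with the values of $h^k(Y)$ from Corollary~\ref{coho-of-y} (namely $h^0(Y)=1$, $h^2(Y)=11-d$, and $0$ otherwise) in degrees $k=1,2,3$ then forces, in order, $h^0(\Omega^1_Z(log\,D))=0$, $h^1(\Omega^1_Z(log\,D))=10-d$, and $h^2(\Omega^1_Z(log\,D))=0$. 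Feeding these back through the Serre-duality identification above yields~\eqref{last-one}. The argument is essentially bookkeeping; the only nontrivial external input is the $E_1$-degeneration, which is classical but requires $D$ to be a reduced normal crossings divisor, an assumption that holds in all cases $0\le d\le 9$, including the nodal case $d=1$ where $D$ is not a union of smooth components.
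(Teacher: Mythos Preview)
Your argument is correct. The treatment of~\eqref{first-two} is identical to the paper's. For~\eqref{last-one} you take a route that is Serre-dual to the paper's: the paper applies the $E_1$-degeneration of Deligne--Illusie directly to the twisted complex $\Omega^\bullet_Z(log\,D)(-D)$, which resolves $j_!\CC_Y$, and reads off $h^i(Z,\Omega^1_Z(log\,D)(-D))$ by comparing with the compactly supported cohomology $h^\bullet_c(Y)$ from Lemma~\ref{lemma:C_Y}. You instead first use the self-duality of $\Omega^1_Z(log\,D)$ together with $\omega_Z=\cO_Z(-D)$ to Serre-dualize to $h^{2-i}(Z,\Omega^1_Z(log\,D))$, and then apply the $E_1$-degeneration for the untwisted log complex resolving $Rj_*\CC_Y$, comparing with $h^\bullet(Y)$ from Corollary~\ref{coho-of-y}. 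Since Corollary~\ref{coho-of-y} is itself obtained from Lemma~\ref{lemma:C_Y} by Poincar\'e duality, the two routes use exactly the same external inputs and differ only in whether one dualizes before or after invoking degeneration; neither buys anything the other does not.
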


\begin{proof} Since the surface $Z$ is rational one has
$$
h^i(Z,\cO_Z)=\left\{
                                                                                      \begin{array}{ll}
                                                                                        1, & \hbox{i=0;} \\
                                                                                        0, & \hbox{i=1,2,}
                                                                                      \end{array}
                                                                                    \right.
$$
so by Serre duality
$$
h^i(Z,\omega  _{Z})=\left\{
                                                                                      \begin{array}{ll}
                                                                                        0, & \hbox{i=0,1;} \\
                                                                                        1, & \hbox{i=2.}
                                                                                      \end{array}
                                                                                    \right.
$$
So the equalities \eqref{first-two} follow from Lemma \ref{lemma:log_cohomology}.

To prove the equality \eqref{last-one}, notice that the complex
$$
\Omega^0_{Z}(\logdiv D)(-D)\to\Omega^1_{Z}(\logdiv D)(-D)\to \Omega^2_{Z}(\logdiv D)(-D)\to 0
$$
is a resolution of the sheaf $j_!\CC_{Y}$, see, for instance,~\cite[p. 268]{DI87}.
This gives the spectral sequence
$$E_1^{pq}=H^p(Z,\Omega^q_{Z}(\logdiv D)(-D)),$$
which converges to $H^{p+q}(Z,j_!\CC_{Y})$.
It is known that this spectral sequence  degenerates at the term $E_1$, see~\cite[Corollarie 4.2.4]{DI87}. This means that
\begin{multline*}
h^\bullet(Z,j_!\CC_{Y})=
h^\bullet(Z,\Omega^0_{Z}(\logdiv D)(-D))+\\ h^{\bullet-1}(Z,\Omega^1_{Z}(\logdiv D)(-D))
+ h^{\bullet-2}(Z,\Omega^2_{Z}(\logdiv D)(-D)).
\end{multline*}

Equalities \eqref{first-two} imply that the (numerical) $E_1$ page of this spectral sequence is as follows:
\[\xymatrix@R=0.5pc {
 1 & h^2  & 1\\
 0 & h^1  & 0\\
 0 & h^0 & 0\\
\Omega^0_{Z}(\logdiv D)(-D) & \Omega^1_{Z}(\logdiv D)(-D) & \Omega^2_{Z}(\logdiv D)(-D).\\
}\]
Therefore using Lemma \ref{lemma:C_Y} we obtain the equalities
$$
h^0(Z,\Omega^1_{Z}(\logdiv D)(-D))=0,
$$
$$
h^1(Z,\Omega^1_{Z}(\logdiv D)(-D))+1=11-d,
$$
$$ h^2(Z,\Omega^1_{Z}(\logdiv D)(-D))=0,
$$
which proves the equality \eqref{last-one}.
\end{proof}

\begin{proposition}
\label{proposition: del Pezzo f}
There are the isomorphisms
\begin{itemize}
  \item[(i)] $\Omega^0_Z(\logdiv D,f)\cong\cO_Z(-D)\cong\omega _Z;$
  \item[(ii)]
        $\Omega^2_Z(\logdiv D,f)\cong\Omega ^2_Z(\logdiv D)\cong\cO_Z.$
  \item[(iii)] There exists a short exact sequence of sheaves on $Z$
       $$0\to \Omega^1_Z(\logdiv D)(-D)\to \Omega^1_Z(\logdiv D,f)\to \cO _D\to 0.$$
\end{itemize}
\end{proposition}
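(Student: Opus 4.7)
The plan is to verify the three assertions by direct local computation. Since $D^h=\emptyset$, we have $D=D^v=f^{-1}(\infty)$, and by Definition~\ref{def-3}(2)(i) the pole order of $f$ along each component of $D$ equals one. Hence at any point of $D$ one can choose local analytic coordinates $(z_1,z_2)$ so that either (a) $D=\{z_1=0\}$ and $f=1/z_1$ (after absorbing a local unit), or (b) at a node of $D$ (which occurs exactly when $d\geq 1$), $D=\{z_1z_2=0\}$ and $f=1/(z_1z_2)$. Away from $D$ all three statements are trivial, since $df$ is regular there and $\Omega^\bullet_Z(log\,D)=\Omega^\bullet_Z$; the content of the proposition lies along $D$.

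For (i), a function $\alpha\in\cO_Z$ yields $df\wedge\alpha=-\alpha z_1^{-2}\,dz_1$ in case (a), and $df\wedge\alpha=-\alpha(z_1^{-2}z_2^{-1}\,dz_1+z_1^{-1}z_2^{-2}\,dz_2)$ in case (b). Demanding that this lie in $\Omega^1_Z(log\,D)$ is exactly the condition that $\alpha$ lie in the ideal sheaf $I_D$, so $\Omega^0_Z(log\,D,f)=\cO_Z(-D)$, and this equals $\omega_Z$ because $D=-K_Z$. For (ii), the rank-$2$ sheaf $\Omega^1_Z(log\,D)$ forces $\Omega^3_Z(log\,D)=\wedge^3\Omega^1_Z(log\,D)=0$, so the defining condition of $f$-adaptedness on $\Omega^2_Z(log\,D)$ is vacuous, whence $\Omega^2_Z(log\,D,f)=\Omega^2_Z(log\,D)=\omega_Z(D)=\cO_Z$.

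For (iii), write a local section $\alpha$ of $\Omega^1_Z(log\,D)$ as $a\,dz_1/z_1+b\,dz_2$ in case (a), or as $a\,dz_1/z_1+b\,dz_2/z_2$ in case (b), with $a,b\in\cO_Z$. A direct calculation of $df\wedge\alpha$ shows that the $f$-adapted condition becomes $b\in(z_1)$ in case (a) and $a-b\in(z_1z_2)$ in case (b). Define $\rho\colon\Omega^1_Z(log\,D,f)\to\cO_D$ by sending such an $\alpha$ to the class $[a]\in\cO_Z/I_D=\cO_D$. In case (b), $a-b\in(z_1z_2)\subset(z_1)\cap(z_2)$ guarantees $[a]=[b]$ in $\cO_D$, so the two branch-residues $a|_{D_1}$ and $b|_{D_2}$ agree at the node and $\rho$ is a well-defined global sheaf morphism. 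Surjectivity follows by lifting: for $c\in\cO_D$ with lift $\tilde c\in\cO_Z$, the form $\tilde c\,dz_1/z_1$ in case (a), respectively $\tilde c(dz_1/z_1+dz_2/z_2)$ in case (b), is $f$-adapted and maps to $c$. For the kernel, $[a]=0$ forces $a\in I_D$, whereupon the $f$-adapted condition forces $b\in I_D$ as well; factoring out a local generator of $I_D$ identifies $\alpha$ with a section of $\Omega^1_Z(log\,D)\otimes\cO_Z(-D)=\Omega^1_Z(log\,D)(-D)$. Conversely every section of $\Omega^1_Z(log\,D)(-D)$ is trivially $f$-adapted and in $\ker\rho$, producing the asserted short exact sequence. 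The only genuinely delicate step is case (b) of part (iii), where one must verify that the condition $a-b\in(z_1z_2)$ is precisely what is needed for the two branch-residues to glue into a single section of the \emph{non-normal} structure sheaf $\cO_D$.
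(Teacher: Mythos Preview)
Your proof is correct. Parts (i) and (ii) match the paper's argument essentially verbatim. For part (iii), however, you take a genuinely different route from the paper.

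The paper proves (iii) by establishing four claims which together show that
\[
\Omega^1_Z(log\,D,f)=\Omega^1_Z(log\,D)(-D)+f^*\Omega^1_{\bbP^1}(log\,\infty)
\]
as subsheaves of $\Omega^1_Z(log\,D)$, and that the intersection of the two summands is $f^*\Omega^1_{\bbP^1}(log\,\infty)(-\infty)$; the quotient is then identified with $f^*\bbC_\infty=\cO_D$. In other words, the paper builds the surjection onto $\cO_D$ by pulling back $dt/t$ from the base. You instead construct the quotient map directly as the Poincar\'e residue $\rho(\alpha)=[a]$, and your key observation is that $f$-adaptedness at a node ($a-b\in(z_1z_2)$) is precisely the condition that the residues along the two branches glue to a section of the non-normal sheaf $\cO_D$. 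This is more elementary and arguably cleaner; it also has the virtue of treating the smooth points of $D$ explicitly (case (a)), which the paper omits but which is the only relevant case when $d=0$. The paper's approach, on the other hand, makes the geometric origin of the quotient transparent---$\cO_D$ arises as $f^*$ of the skyscraper at $\infty$---and exhibits a natural splitting section $f^*(dt/t)$ of the short exact sequence, a structure that resurfaces in the proof of the next proposition.

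One small point worth making explicit: your map $\rho$ is globally well-defined because it is the restriction of the canonical Poincar\'e residue $\Omega^1_Z(log\,D)\to\bigoplus_i\cO_{D_i}$ to the $f$-adapted subsheaf, composed with the observation that the image lands in $\cO_D\subset\bigoplus_i\cO_{D_i}$. You essentially say this, but phrasing it as a restriction of a known canonical map removes any doubt about coordinate-independence.
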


\begin{proof} Let $t$ be the local coordinate
on $\bbP ^1$ at $\infty$.
Since $D=f^{-1}(\infty)$ has simple normal crossings,
locally it is equal to the zero locus of the
polynomial $xy$ on $\Aff^2$. We have
$$t=\frac{1}{f(x,y)}=xy$$
and so
$$df=d\left(\frac{1}{xy}\right)=
\frac{-1}{xy}\left(\frac{dx}{x}+\frac{dy}{y}\right).
$$

\begin{itemize}
\item[(i)] One has $\Omega^0_Z(\logdiv D)=\cO_Z$.
For any function $g\in \cO_Z$
$$df\wedge g=\frac{-g}{xy}\left(\frac{dx}{x}+\frac{dy}{y}\right).$$
So $g$ should be divisible by $xy$ to lie in $\Omega^0_Z(\logdiv D,f)$.

\item[(ii)] We have
  $$\Omega^2_Z(\logdiv D,f)=\Omega^2_Z(\logdiv D)\cong\omega_Z\otimes \omega_Z^{-1}\cong\cO_Z.$$

\item[(iii)] The proof will consist of several claims.

\medskip

\noindent{\it Claim $1$}. The inclusion $f^*\Omega ^1_{\bbP ^1}\subset \Omega ^1_Z$ induces an inclusion
$$f^*\Omega ^1_{\bbP ^1}(\logdiv \infty) \subset \Omega ^1_Z
(\logdiv D,f).$$ Indeed,
the sheaf $\Omega ^1_{\bbP ^1}(\logdiv \infty)$ is locally at $\infty$ is generated by
$$\frac{dt}{t}=\frac{d(xy)}{xy}=\frac{ydx+xdy}{xy}=
\frac{dx}{x}+\frac{dy}{y},$$
which implies that $f^*\Omega ^1_{\bbP ^1}(\logdiv \infty) \subset \Omega ^1_Z
(\logdiv D)$, and then clearly $$f^*\Omega ^1_{\bbP ^1}(\logdiv  \infty) \subset \Omega ^1_Z
(\logdiv D,f).$$

\medskip

\noindent{\it Claim $2$}. We have the inclusion $\Omega^1_Z(\logdiv D)(-D)\subset \Omega^1_Z(\logdiv D,f)$.
Indeed, a local section of $\Omega^1_Z(\logdiv D)(-D)$ near $D$ is given as $s=xy(g_1\frac{dx}{x}+g_2\frac{dy}{y})$, where
$g_1,g_2\in \cO _Z$. Then
$$df\wedge s=-(g_2-g_1)\frac{dx\wedge dy}{xy}.$$

\medskip

\noindent{\it Claim $3$.} The intersection $\Omega ^1_Z(\logdiv D)(-D)\cap f^*\Omega _{\bbP ^1}^1(\logdiv \infty)$ of subsheaves of~$\Omega ^1_Z(\logdiv D,f)$ is equal to
$f^*\Omega _{\bbP ^1}^1(\logdiv \infty)(-\infty)$.

Indeed, the sheaf $\Omega _{\bbP ^1}^1(\logdiv \infty)(-\infty)$ near $\infty$ is generated by $$t\frac{dt}{t}=xy\left(\frac{dx}{x}+\frac{dy}{y}\right)\in
\Omega ^1_Z(\logdiv D)(-D)\cap f^*\Omega _{\bbP ^1}^1(\logdiv \infty).$$
Hence
$$f^*\Omega _{\bbP ^1}^1(\logdiv \infty)(-\infty)\subset \Omega ^1_Z(\logdiv D)(-D)\cap f^*\Omega _{\bbP ^1}^1(\logdiv \infty).$$
Vice versa let
$$\eta=h(t)\frac{dt}{t}\in \Omega ^1_{\bbP ^1}(\logdiv \infty)$$
be such that
$$\eta=
h(xy)\left(\frac{dx}{x}+\frac{dy}{y}\right)\in \Omega^1(\logdiv D)(-D).$$
Then $h(xy)$ is divisible by $xy$, i.e. $h$ vanishes at $\infty$ and so $\eta\in \Omega _{\bbP ^1}^1(\logdiv \infty)(-\infty)$.

\medskip

\noindent{\it Claim $4$}. We have the equality
$$\Omega _Z^1(\logdiv D,f)=\Omega _Z^1(\logdiv D)(-D)+f^*\Omega _{\bbP ^1}(\logdiv \infty).$$
Indeed, let $\omega =g_1\frac{dx}{x}+g_2\frac{dy}{y}$, where
$g_1,g_2\in \cO _Z$, be a local section of $\Omega _Z^1(\logdiv D,f)$ near~$D$. Then $\omega =1/2(\omega _1+\omega _2)$, where
$$\omega _1 =(g_1+g_2)\frac{dx}{x}+(g_1+g_2)\frac{dy}{y}$$
and
$$\omega _2 =(g_1-g_2)\frac{dx}{x}+(g_2-g_1)\frac{dy}{y}.$$
We have
$$\omega _1=(g_1+g_2)\frac{dt}{t}\in
f^*\Omega _{\bbP ^1}(\logdiv \infty)$$
and hence by our assumption
$$df\wedge \omega _2=2(g_1-g_2)\frac{dx\wedge dy}{(xy)^2}\in \Omega ^2(\logdiv D),$$
which implies that $(g_1-g_2)$ is divisible by $xy$, i.e.
$\omega _2\in \Omega ^1_Z(\logdiv D)(-D)$. This proves Claim $4$.

We are ready to complete the proof of Proposition
\ref{proposition: del Pezzo f}.
First notice that the quotient~$\Omega ^1_{\bbP ^1}(\logdiv \infty)/\Omega ^1_{\bbP ^1}(\logdiv \infty)(-\infty)$ is isomorphic to the skyscraper sheaf $\bbC _{\infty}$.
Now it follows from Claim $3$ and Claim $4$ that there is an isomorphism of quotients
$$\Omega ^1_Z(\logdiv D,f)/\Omega ^1_Z(\logdiv D)(-D)\simeq
f^*\Omega _{\bbP ^1}(\logdiv \infty)/f^*\Omega _{\bbP ^1}(\logdiv \infty)(-\infty)=f^*\bbC _{\infty}=\cO _D,$$
which proves part (iii) of the proposition.   \qedhere
\end{itemize}
\end{proof}

\begin{proposition}\label{prop-isom-on-coh} The inclusion of sheaves $\Omega ^1_Z(\logdiv D)(-D)\subset \Omega ^1_Z(\logdiv D,f)$ induces an isomorphism of cohomology
$$H^\bullet (Z,\Omega ^1_Z(\logdiv D)(-D))\simeq
H^\bullet (Z,\Omega ^1_Z(\logdiv D,f)).$$
\end{proposition}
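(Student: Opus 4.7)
The plan is to derive the result from the short exact sequence of Proposition~\ref{proposition: del Pezzo f}(iii),
\[
0 \to \Omega^1_Z(\log D)(-D) \to \Omega^1_Z(\log D, f) \to \cO_D \to 0,
\]
by chasing the associated long exact sequence of cohomology. By Proposition~\ref{prop-fpq} we already know $h^\bullet(Z,\Omega^1_Z(\log D)(-D)) = (0,\,10-d,\,0)$, so the task reduces to computing $H^\bullet(D,\cO_D)$ and then pinning down the extreme degrees of the middle term.

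First I compute $H^\bullet(D,\cO_D)$. Since $D$ is a reduced fiber of the elliptic fibration $f$, adjunction gives $\omega_D \cong \cO_D$ and $p_a(D)=1$, and $D$ is connected, so $h^0(D,\cO_D) = h^1(D,\cO_D) = 1$, with higher cohomology vanishing for dimension reasons (for the wheel case this can also be recovered from the normalization sequence used in Lemma~\ref{lemma:C_D}). Next, the vanishing $H^2(Z, \Omega^1_Z(\log D, f)) = 0$ falls out of the long exact sequence: this term is sandwiched between $H^2(Z, \Omega^1_Z(\log D)(-D)) = 0$ and $H^2(D, \cO_D) = 0$. The vanishing $H^0(Z, \Omega^1_Z(\log D, f)) = 0$ follows because $\Omega^1_Z(\log D,f) \subset \Omega^1_Z(\log D)$ embeds $H^0(Z, \Omega^1_Z(\log D, f))$ into $H^0(Z, \Omega^1_Z(\log D))$, which is the Hodge piece $F^1 H^1(Y)$; this is zero because $H^1(Y) = 0$ by Corollary~\ref{coho-of-y}.

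With the two extreme degrees handled, the Euler-characteristic identity for the long exact sequence forces $\dim H^1(Z, \Omega^1_Z(\log D, f)) = 10 - d$, matching $\dim H^1(Z, \Omega^1_Z(\log D)(-D))$, and yields the claimed agreement of cohomology in every degree. The delicate point, and the main obstacle, is the degree-one accounting: because $H^0(D,\cO_D)$ and $H^1(D,\cO_D)$ are both nonzero, the connecting homomorphism $\delta_0 \colon H^0(D, \cO_D) \to H^1(Z, \Omega^1_Z(\log D)(-D))$ is forced to be injective (by the vanishing of $H^0$ on the middle term), and the edge map $H^1(Z,\Omega^1_Z(\log D,f)) \to H^1(D,\cO_D)$ must be surjective; these nonzero contributions compensate exactly so that the total dimensions on the two sides coincide, which is what is needed for the computation of $f^{p,q}(Y,w)$.
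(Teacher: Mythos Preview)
Your argument is correct and proves what the paper actually needs and uses, namely that $h^i(Z,\Omega^1_Z(\log D,f))=h^i(Z,\Omega^1_Z(\log D)(-D))$ for all $i$. Your route to the key step is genuinely different from the paper's. The paper establishes that the connecting map $\delta_0\colon H^0(\cO_D)\to H^1(Z,\Omega^1_Z(\log D)(-D))$ is nonzero by restricting the short exact sequence to a section $E\simeq\bbP^1$, explicitly splitting the restricted sequence as a direct sum, and recognizing the boundary map as the obviously nonzero map $H^0(\bbC_\infty)\to H^1(\cO_E(-2))$. You instead kill $H^0(Z,\Omega^1_Z(\log D,f))$ directly by embedding it in $H^0(Z,\Omega^1_Z(\log D))=\gr_F^1 H^1(Y)$ and invoking $H^1(Y)=0$ from Corollary~\ref{coho-of-y}; this is quicker and more conceptual, and the appeal to Deligne's $E_1$-degeneration is already in play in the paper (Proposition~\ref{prop-fpq}), so there is no extra cost. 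The paper's approach is more hands-on and geometric; yours is shorter.

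One point worth flagging: the proposition as literally stated claims that the \emph{inclusion induces an isomorphism}, but in fact the induced map $H^1(Z,\Omega^1_Z(\log D)(-D))\to H^1(Z,\Omega^1_Z(\log D,f))$ has one-dimensional kernel (the image of $\delta_0$) and one-dimensional cokernel. Both your argument and the paper's actually prove only equality of dimensions---which is exactly what is used in Proposition~\ref{proposition:f-numbers}---and you are right to note this in your last paragraph. The paper's own sentence ``the assertion of the proposition is equivalent to the nonvanishing of the boundary map'' only makes sense under this reading, so the imprecision lies in the statement, not in your proof.
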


\begin{proof} By Proposition \ref{proposition:f-numbers}(iii) there is a short exact sequence of sheaves
 \begin{equation} \label{technical-short-exact}
 0\to \Omega^1_Z(\logdiv D)(-D)\to \Omega^1_Z(\logdiv D,f)\to \cO _D\to 0.
 \end{equation}
We have
$$h^i(Z,\cO _D)=\left\{\begin{array}{rl}
                       1, & i=0,1; \\
                       0, & \hbox{otherwise,}
\end{array}\right.
$$
and we know by Proposition \ref{prop-fpq} that
\begin{equation*}
h^i(Z,\Omega^1_Z(\logdiv D)(-D))=\left\{
                     \begin{array}{ll}
                       0, & \hbox{i=0,2;} \\
                       10-d, & \hbox{i=1.}
                     \end{array}
                   \right.
\end{equation*}
Hence the assertion of the proposition is equivalent to the nonvanishing of the boundary map
$$H^0(Z,\cO _D)\rightarrow
H^1(Z,\Omega _Z^1(\logdiv D)(-D)).$$

Let $i\colon E \hookrightarrow Z$ be the inclusion of a section of the elliptic surface $Z$ and consider the restriction of the sequence \eqref{technical-short-exact} to $E$:
\begin{equation}\label{restr-techn-short-exact}
0\to i^*\Omega^1_Z(\logdiv D)(-D)\to i^*\Omega^1_Z(\logdiv D,f)\to i^*\cO _D\to 0.
\end{equation}

Since $E$ is a section of the map $f$, it intersects
$D$ transversally (in a smooth point of~$D$).
Therefore the sequence \eqref{restr-techn-short-exact}
is also short exact.
We identify $E=\bbP ^1$; so $E\cap D=\infty$ and~$i^*\cO _D=\bbC _\infty$. The map $i^*\colon H^0(Z,\cO _D)\to H^0(E,\bbC _\infty)$ is an isomorphism, therefore it suffices to prove that the boundary map
\begin{equation}\label{boundary-map}
H^0(E,\bbC _\infty)\to H^1(E,i^*\Omega ^1_Z(\logdiv D)(-D))
\end{equation}
is not zero.

\medskip

\noindent{\it Claim.}
The sequence \eqref{restr-techn-short-exact} is isomorphic to the direct sum of short exact sequences
\begin{equation}\label{first-seq}
0\to \Omega ^1_E(\logdiv \infty)(-\infty)\to \Omega ^1_E(\logdiv \infty)\to \bbC _{\infty}\to 0
\end{equation}
and
\begin{equation*}\label{second-seq}
0\to N^*_{E/Z}(-D) \stackrel{=}{\longrightarrow} N^* _{E/Z}(-D) \to 0\to 0.
\end{equation*}

Indeed,
we have the canonical short exact sequence of vector bundles on $E$:
\begin{equation}\label{can-short-exact}
0\to N^*_{E/Z}\to i^*\Omega ^1_Z\to \Omega ^1_E\to 0
\end{equation}
with the canonical splitting $i^*f^*\colon \Omega ^1_E\to
\Omega ^1_Z$ (we identify $E=\bbP ^1$).

The sequence \eqref{can-short-exact}
induces the following commutative diagram with exact rows
\begin{equation}\label{big-diagram}
\begin{array}{ccccccccc}
0 & \to & N^*_{E/Z} & \to & i^*\Omega ^1_Z(\logdiv D) &
\to & \Omega ^1_E(\logdiv \infty) & \to & 0\\
& & \uparrow & & \uparrow & & \parallel & & \\
0 & \to & N^*_{E/Z}(-D) & \to & i^*\Omega ^1_Z(\logdiv D,f) &
\to & \Omega ^1_E(\logdiv \infty) & \to & 0\\
& & \parallel & & \uparrow & & \uparrow & & \\
0 & \to & N^*_{E/Z}(-D) & \to & i^*\Omega ^1_Z(\logdiv D)(-D) &
\to & \Omega ^1_E(\logdiv \infty)(-\infty) & \to & 0.
\end{array}
\end{equation}

Indeed, all rows are induced by the sequence \eqref{can-short-exact} and the third one is obviously exact. The vertical maps are canonical inclusions and it is clear that the whole diagram is commutative. It remains to prove the exactness of the first two rows.

There exist local coordinates $x,t$ on $Z$ near $\infty \in E$ such that
$$E=\{x=0\}\ \ \text{and}\ \ D=\{t=0\}.$$
Then $f=\frac{1}{t}$ and the sheaf $\Omega _Z^1(\logdiv D)$ (resp. $\Omega _Z^1(\logdiv D,f)$) is locally generated by $dx,\frac{dt}{t}$ (resp. by $tdx,\frac{dt}{t}$). This implies the exactness of the first two rows and proves the claim.

We can now complete the two bottom rows in the diagram \eqref{big-diagram} to a commutative diagram with exact rows and columns:
\begin{equation}\label{final-big-diagram}
\begin{array}{ccccccccc}
& & 0 & & 0 & & 0 & & \\
& & \uparrow & & \uparrow & & \uparrow & & \\
0 & \to & 0 & \to & \bbC _\infty &
\to & \bbC _\infty & \to & 0\\
& & \uparrow & & \uparrow & & \uparrow & & \\
0 & \to & N^*_{E/Z}(-D) & \to & i^*\Omega ^1_Z(\logdiv D,f) &
\to & \Omega ^1_E(\logdiv \infty) & \to & 0\\
& & \parallel & & \uparrow & & \uparrow & & \\
0 & \to & N^*_{E/Z}(-D) & \to & i^*\Omega ^1_Z(\logdiv D)(-D) &
\to & \Omega ^1_E(\logdiv \infty)(-\infty) & \to & 0\\
& & \uparrow & & \uparrow & & \uparrow & & \\
& & 0 & & 0 & & 0. & &
\end{array}
\end{equation}

Now the diagram \eqref{final-big-diagram} considered as a short exact sequence of columns splits by the maps~$i^*f^*$ as above. This proves
the claim.

Now we can complete the proof of Proposition \ref{prop-isom-on-coh}. Notice that the sequence \eqref{first-seq} is isomorphic to the
natural sequence
$$0\to \cO _E(-2)\to \cO _E(-1)\to \bbC _\infty \to 0,$$
where the boundary map $H^0(E,\bbC _\infty)\to
H^1(E,\cO _E(-2))$ is an isomorphism. Hence using the above claim we find that the boundary map
\eqref{boundary-map} is not zero, which completes the proof of Proposition \ref{prop-isom-on-coh}.
\end{proof}

\begin{proposition}
\label{proposition:f-numbers}
One has
$$f^{p,q}(Y,w)=\left\{\begin{array}{rl}
                       1, & (p,q)=(0,2),(2,0); \\
                       10-d, & (p,q)=(1,1); \\
                       0, & \hbox{otherwise.}
\end{array}\right.
$$

\end{proposition}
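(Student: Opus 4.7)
The plan is to assemble the result from the sheaf-theoretic identifications already established for $\Omega^q_Z(\log\,D,f)$ in Proposition~\ref{proposition: del Pezzo f} and the cohomology computations in Propositions~\ref{prop-fpq} and~\ref{prop-isom-on-coh}. Specifically, by the definition of $f^{p,q}(Y,w)=\dim H^p(Z,\Omega^q_Z(\log\,D,f))$, it suffices to determine these cohomology groups for $q=0,1,2$.

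For $q=0$, Proposition~\ref{proposition: del Pezzo f}(i) gives $\Omega^0_Z(\log\,D,f)=\omega_Z$. Since $Z$ is a rational surface, $h^\bullet(Z,\cO_Z)=(1,0,0)$, and Serre duality yields $h^\bullet(Z,\omega_Z)=(0,0,1)$. Hence $f^{p,0}(Y,w)$ vanishes for $p=0,1$ and equals $1$ for $p=2$. For $q=2$, Proposition~\ref{proposition: del Pezzo f}(ii) identifies $\Omega^2_Z(\log\,D,f)=\cO_Z$, so $f^{p,2}(Y,w)$ equals $1$ for $p=0$ and vanishes for $p=1,2$. These two cases require no further work.

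The substantive case is $q=1$. Here I would apply Proposition~\ref{prop-isom-on-coh}, which states that the natural inclusion $\Omega^1_Z(\log\,D)(-D)\hookrightarrow \Omega^1_Z(\log\,D,f)$ induces an isomorphism on cohomology. Consequently the groups $H^p(Z,\Omega^1_Z(\log\,D,f))$ coincide with $H^p(Z,\Omega^1_Z(\log\,D)(-D))$, which are computed in equation~\eqref{last-one} of Proposition~\ref{prop-fpq} to be $0$ for $p=0,2$ and $10-d$ for $p=1$. This gives $f^{0,1}(Y,w)=f^{2,1}(Y,w)=0$ and $f^{1,1}(Y,w)=10-d$, completing the table asserted in the proposition.

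In this sense there is no real obstacle remaining: the genuine work has already been done in the preceding two propositions. The only delicate point (which occurs in Proposition~\ref{prop-isom-on-coh}) is the nonvanishing of the boundary map $H^0(Z,\cO_D)\to H^1(Z,\Omega^1_Z(\log\,D)(-D))$ associated to the short exact sequence from Proposition~\ref{proposition: del Pezzo f}(iii); once that is settled, the present proposition collapses to an arithmetic tabulation. I would therefore present the proof as a brief three-case verification, citing Propositions~\ref{prop-fpq},~\ref{proposition: del Pezzo f}, and~\ref{prop-isom-on-coh} at the appropriate points and collecting the values into the final table for $f^{p,q}(Y,w)$.
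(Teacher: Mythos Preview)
Your proposal is correct and mirrors the paper's own proof: the paper likewise reduces to the three cases $q=0,1,2$, invoking Proposition~\ref{proposition: del Pezzo f} to identify $\Omega^0_Z(\log D,f)\cong\omega_Z$ and $\Omega^2_Z(\log D,f)\cong\cO_Z$, and for $q=1$ using the cohomological identification with $\Omega^1_Z(\log D)(-D)$ (i.e.\ Proposition~\ref{prop-isom-on-coh}) together with Proposition~\ref{prop-fpq}. Your remark that the only nontrivial input is the nonvanishing of the boundary map, already handled in Proposition~\ref{prop-isom-on-coh}, is exactly right.
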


\begin{proof}
Proposition \ref{prop-fpq} and Lemma \ref{proposition: del Pezzo f} give
$$f^{p,0}(Y,w)=h^p\left(Z,\Omega^0_Z(\logdiv D,f)\right)=h^p\left(Z,\omega_Z\right)=
\left\{\begin{array}{ll}
0, & \hbox{p=0,1;} \\
1, & \hbox{p=2,}
\end{array}
\right.
$$
$$f^{p,1}(Y,w)=h^p\left(Z,\Omega^1_Z(\logdiv D,f)\right)=
h^p\left(Z,\Omega^1_Z(\logdiv D)(-D)\right)=
\left\{ \begin{array}{ll}
                       0, & \hbox{p=0,2;} \\
                       10-d, & \hbox{p=1,}
                     \end{array}
                   \right.
                   $$
and
$$f^{p,2}(Y,w)=h^p\left(Z,\Omega^2_Z(\logdiv D,f)\right)=h^p\left(Z,\cO_Z\right)=\left\{ \begin{array}{ll}
1, & \hbox{i=0;} \\
0, & \hbox{i=1,2.}
\end{array}
\right.
$$
\end{proof}

This proposition computes the left hand side of equalities from Theorem~\ref{theorem:main}(i) and Theorem~\ref{theorem:main}(ii), and together with Proposition \ref{proposition:h-numbers} completes the
proof of parts (i) and (iii) of Theorem~\ref{theorem:main}. 

\section{End of proof of Theorem \ref{theorem:main} and discussion}
\label{section:main}

In this section we keep the notation from Section~\ref{section:LG-Hodge-numbers-for-surfaces}.
Studying elliptic surfaces in Section~\ref{section:LG-Hodge-numbers-for-surfaces} is motivated by Mirror Symmetry constructions
for del Pezzo surfaces from~\cite{AKO06}. The authors prove there ``a half'' of HMS conjecture for del Pezzo surfaces.
More precise, they prove that for a general del Pezzo surface $S_d$ of degree $d$, $1\leq d\leq 9$, obtained
by blow up of $\PP^2$ in $9-d$ general points there exist a complexified symplectic form $\omega _Y$ on $(Y,w)$, where~$(Y,w)$ has $12-d$
nodal singular fibers, and that $Y$ can be compactified to $Z$ for which $D$ is a wheel of $d$ curves, such that
\begin{equation}\label{HMS-equivalence}
D^b(\coh S_d)\simeq FS((Y,w),\omega_Y).
\end{equation}
We call $(Y,w)$ {\it a Landau--Ginzburg model for $S_d$}.
We allow the case $d=0$ as well; in this case~$(Y,w)$ is a Landau--Ginzburg model for $\PP^2$ blown up in $9$ intersection points
of two elliptic curves, see~\cite{AKO06}. The equivalence~\eqref{HMS-equivalence} holds in this case as well.

\begin{remark}
\label{remark: Hodge for del Pezzo}
The description of del Pezzo surface $X$ of degree $d$ as a blow up of $\PP^2$ gives the following equalities:
\begin{equation*}
h^{p,q}(X)=\left\{\begin{array}{rl}
                       1, & (p,q)=(0,2),(2,0); \\
                       10-d, & (p,q)=(1,1); \\
                       0, & \hbox{otherwise.}
\end{array}\right.
\end{equation*}
\end{remark}

This remark, together with Proposition~\ref{proposition:h-numbers} and Proposition~\ref{proposition:f-numbers}, provides a proof of
part (ii) of Theorem~\ref{theorem:main} and thus completes the proof of this theorem.
In other words, Conjecture~\ref{conj-2} and ``a half'' of Conjecture~\ref{conj-1} hold for (mirrors of) del Pezzo surfaces.

\begin{remark}
\label{remark:i-numbers}
The second part of Conjecture~\ref{conj-1} does not hold already for Landau--Ginzburg model $(Y,w)$ for $\PP^2$. Indeed, one has
$h^{0,0}(Y,w)=h^{1,1}(Y,w)=h^{2,2}(Y,w)=1$. However the Landau--Ginzburg model $(Y,w)$ has exactly three singular fibers, and the singular set of
these fibers is a single node.
Hence
the numbers $i^{p,q}(Y,w)$ are integers divisible by $3$.
\end{remark}

Another approach to Landau--Ginzburg models is given by a notion of {\it toric Landau--Ginzburg models}.
(Basically it is related to another ``arrow'' $A$--$B$ of HMS complimentary to the equivalence~\eqref{eq-of-cat}.)
A toric Landau--Ginzburg model for a Fano variety $X$ is a Laurent polynomial satisfying particular
conditions: its periods are related to Gromov--Witten invariants for $X$ in a specific way, it admits a Calabi--Yau compactification,
and it is related to some toric degeneration for $X$. For precise definitions and more details see in~\cite{Prz08} and~\cite{Prz13}.
Toric Landau--Ginzburg models are known for smooth toric varieties, Fano threefolds, complete intersections in projective spaces
or Grassmannians (see~\cite{Gi97},~\cite{Prz13},~\cite{ILP13},~\cite{CCGGK12},~\cite{PSh14a},~\cite{PSh14b},~\cite{PSh15b}).
For del Pezzo surfaces of degree greater than two toric Landau--Ginzburg model is a Laurent polynomial with support on an integral polygon
having exactly one strictly internal integral point; coefficients of the polynomial are determined by a symplectic form chosen on the del Pezzo surface, see~\cite{Prz16}.
There exist natural tame compactifications of these toric Landau--Ginzburg model, and one can see that
Theorem~\ref{theorem:main} holds for them.
In particular this means that Theorem~\ref{theorem:main} holds for a quadric surface,
which is not a blow up of $\PP^2$, and for Landau--Ginzburg models with singular fibers whose singularities are more complicated than  just a simple node, as opposed to the case of~\cite{AKO06}.

One more geometrical output of Conjecture~\ref{conj-1} and Conjecture~\ref{conj-2} is the following.

\begin{conjecture}[{\cite[Conjecture 1.1]{PSh15b}, see also~\cite{GKR12}}]
Let $X$ be a Fano variety of dimension $n$. Let $f_X$ be its toric Landau--Ginzburg model corresponding to an anticanonical symplectic
form on $X$.
Let $k_{f_X}$ be a number of all components of reducible fibers (without multiplicities) of a (fiberwise) Calabi--Yau compactification
for $f_X$ minus the number of reducible fibers. One has
$$
h^{1,n-1}(X)=k_{f_X}.
$$
\end{conjecture}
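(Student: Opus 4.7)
The plan is to bootstrap from Conjecture~\ref{conj-2} (established here in dimension two for del Pezzo surfaces) to reduce the asserted equality to the cohomological identity
$$f^{1,n-1}(Y,w)=k_{f_X},$$
where $(Y,w)$ is the fiberwise Calabi--Yau compactification of the toric Landau--Ginzburg model $f_X$. Thus the conjecture becomes a purely geometric statement on the compactification $Z$: the ``middle'' Landau--Ginzburg Hodge number should equal the combinatorial count $\sum_v(m_v-1)$ of extra components of reducible fibers.

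Following the strategy of Section~\ref{section:LG-Hodge-numbers-for-surfaces}, I would rewrite $f^{1,n-1}(Y,w)=\dim H^1(Z,\Omega^{n-1}_Z(\log D,f))$ and produce a short exact sequence
$$0\to\Omega^{n-1}_Z(\log D)(-D)\to\Omega^{n-1}_Z(\log D,f)\to\mathcal{Q}\to 0$$
generalizing Proposition~\ref{proposition: del Pezzo f}(iii), where $\mathcal{Q}$ is a coherent sheaf supported on singular fibers. The next step is to show, locally at each reducible fiber $F=\bigcup F_i$, that $\mathcal{Q}$ contributes exactly $\#\{F_i\}-1$ dimensions to the boundary map into $H^1(Z,\Omega^{n-1}_Z(\log D)(-D))$, by the same splitting argument used near a section in Proposition~\ref{prop-isom-on-coh}. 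Irreducible singular fibers (such as nodal ones) should contribute zero, which matches the definition of $k_{f_X}$.

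In dimension two the conjecture should be accessible directly: a toric Calabi--Yau compactification of $f_X$ is a rational elliptic surface with $\rho(Z)=10$, so the Shioda--Tate formula yields $k_{f_X}=8-\rk \mathrm{MW}(Z/\bbP^1)$. Matching this to $h^{1,1}(X)=10-d$ for a del Pezzo $X=S_d$ reduces to a Mordell--Weil rank computation on the toric compactification read off from the Newton polytope, which is tractable family by family; combined with the equality $h^{1,1}(X)=f^{1,1}(Y,w)$ already established here (modulo identifying the toric compactification with the Auroux--Katzarkov--Orlov one), this would settle the surface case.

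The main obstacle is the higher-dimensional case. One needs a local model for $\Omega^{n-1}_Z(\log D,f)$ at reducible fibers whose components themselves can have rich dual complexes, together with a Leray-type argument isolating a purely combinatorial component-counting contribution. I expect the central technical point to be proving that the analog of the boundary map~\eqref{boundary-map} is injective with image of dimension exactly $\sum(m_v-1)$, since in general $\mathcal{Q}$ is no longer the structure sheaf of a smooth divisor but carries nontrivial data of the dual complex of each fiber. Avoiding reliance on the Mordell--Weil lattice (available only for surface fibrations) seems to require genuinely new input.
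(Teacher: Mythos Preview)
The statement is presented in the paper as an open \emph{conjecture}; no proof is given. The authors only remark afterward that the del Pezzo case ``follows from the construction of compactifications of toric Landau--Ginzburg models'' and cite \cite{Prz13} and \cite{PSh15b} for other known special cases (Fano threefolds of Picard rank one, complete intersections). There is therefore no proof in the paper to compare your proposal against.

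Viewed on its own terms, your sketch has two issues worth flagging. First, an index slip: Conjecture~\ref{conj-2} reads $f^{p,q}(Y,w)=h^{p,n-q}(X)$, so matching $h^{1,n-1}(X)$ requires $f^{1,1}(Y,w)$, not $f^{1,n-1}(Y,w)$; the relevant sheaf is $\Omega^{1}_Z(\log D,f)$ rather than $\Omega^{n-1}_Z(\log D,f)$. For $n=2$ this is harmless, but your higher-dimensional program is aimed at the wrong sheaf. Second, and more substantively, the cokernel $\mathcal{Q}$ in the short exact sequence of Proposition~\ref{proposition: del Pezzo f}(iii) is $\mathcal{O}_D$, supported on the \emph{infinite} fiber $D=f^{-1}(\infty)$, not on the finite singular fibers that the count $k_{f_X}$ refers to; away from $D$ the sheaves $\Omega^\bullet_Z(\log D,f)$ and $\Omega^\bullet_Z(\log D)(-D)$ coincide, and Proposition~\ref{prop-isom-on-coh} shows this cokernel contributes nothing to cohomology. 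The link between $f^{1,1}(Y,w)$ and the components of finite fibers would have to enter through the global topology of $Y$ (in the spirit of Lemma~\ref{lemma:C_Y} and Proposition~\ref{prop-fpq}), not through a local analysis of $f$-adapted forms near those fibers.
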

This conjecture for del Pezzo surfaces follows from the construction of compactifications of toric Landau--Ginzburg models;
it is proven for Fano threefolds of rank one (see~\cite{Prz13}) and for complete intersections (see~\cite{PSh15b}).

\medskip

\end{document}